\newtheorem{theorem}{Theorem}[section]
\newtheorem{definition}[theorem]{Definition}
\newtheorem{remark}[theorem]{Remark}
\newtheorem{assumption}[theorem]{Assumption}
\newtheorem{proposition}[theorem]{Proposition}
\newtheorem{corollary}[theorem]{Corollary}
\def\Q{\mathbb{Q}}
\def\R{\mathbb{R}}
\def\Z{\mathbb{Z}}
\def\A{\mathbb{A}}
\def\Cm{\mathbb{C}}
\def\G{\mathbb{G}}
\def\T{\mathcal{T}}
\def\W{\mathcal{W}}
\def\IW{\mathcal{IW}}
\def\Fm{\mathcal{F}}
\def\vp{\varpi}
\def\lan{\langle}
\def\ran{\rangle}
\def\ra{\rightarrow}
\def\ov{\overline}
\def\ul{\underline}
\def\wh{\widehat}
\def\wt{\widetilde}
\def\st{\stackrel}
\def\tr{\textrm}
\begin{document}
\title{$p$-adic families of automorphic forms over some unitary Shimura varieties}
\author{Xu Shen}
\date{}
\address{Mathematisches Institut\\
Universit\"at Bonn\\
Endenicher Allee 60\\
53115, Bonn, Germany} \email{shen@math.uni-bonn.de}
\address{Current address: Fakult\"at f\"{u}r Mathematik\\
Universit\"at Regensburg\\
Universitaetsstr. 31\\
93040, Regensburg, Germany} \email{xu.shen@mathematik.uni-regensburg.de}
\renewcommand\thefootnote{}
\footnote{2010 Mathematics Subject Classification. Primary: 11F85, 11G18; Secondary: 14G35.}

\begin{abstract}
We construct some $n$-dimensional eigenvarieties for finite slope overconvergent eigenforms over some unitary Shimura varieties with signature $(1,n-1)\times(0,n)\times\cdots\times(0,n)$ by adapting Andreatta-Iovita-Pilloni's method. We also show that there are some Galois pseudo-characters over our eigenvarieties by studying analytic continuation of finite slope eigenforms over these Shimura varieties.
\end{abstract}

\maketitle
\tableofcontents

\section{Introduction}
The theory of $p$-adic families of automorphic forms started from the various works of Hida and Coleman. For the related history one can see the introduction of \cite{CM}. In loc. cit. Coleman and Mazur constructed the eigencurve for finite slope elliptic modular forms, which parameterizes the systems of eigenvalues for Hecke eigenforms. Since then, there have been many authors working on this field to develop a general theory of $p$-adic automorphic forms on higher rank groups. For example, there is the work of Urban \cite{U} based on studying overconvergent cohomology. In particular, Urban constructed eigenvarieties for any reductive groups $G$ over $\Q$ such that $G(\R)$ admits discrete series. There is also an approach of Emerton \cite{E} by studying the completed cohomology of arithmetic quotients. For definite unitary groups, Chenevier has constructed some eigenvarieties in \cite{Ch1} which have quite useful applications to Galois representations, see for example \cite{Ch2}. Finally, there is a geometric approach of Andreatta, Iovita and Pilloni \cite{AIP} for finite slope Siegel modular cuspforms. Their method is based on the theory of canonical subgroups, and is quite promising to be generalized to general PEL type Shimura varieties, see for example \cite{AIP2} and \cite{Br1}.

In this paper, we work out the construction of some $n$-dimensional eigenvarieties for finite slope eigenforms over some unitary Shimura varieties with signature $(1,n-1)\times(0,n)\times\cdots\times(0,n)$ by adapting Andreatta-Iovita-Pilloni's method in \cite{AIP}. These Shimura varieties are restricted to the class studied by Harris-Taylor in \cite{HT} for proving the local Langlands correspondence for $GL_n$. However, see Remark 6.5 for a discussion for more general Shimura varieties.

There were some related works in our setup. In \cite{Ka} Kassaei studied $p$-adic modular forms of integral weights over the Shimura curves, that is the case $n=2$. In \cite{Br} Brasca studied non integral weight forms over these Shimura curves and constructed an eigencurve. However, the definitions of modular forms in both \cite{Ka} and \cite{Br} can not be compared with the classical theory of automorphic forms, since their modular forms are sections of some line bundles which are not automorphic vector bundles over the Shimura curves. In \cite{D} Ding studied also non integral weight forms, whose definition is compatible with classical modular forms. He constructed in fact an eigenvariety of dimension two, and used it to study the local-global compatibility of Langlands correspondence in this setting. This paper deals with the higher dimension Shimura varieties.

As in \cite{D}, we study automorphic forms with fixed weights outside a fixed archimedean place $\tau_0$ (of the total real field $F^+$, see subsection 2.1). Let the weight $\kappa$ at $\tau_0$ vary in the $n$-dimensional $p$-adic weight space $\W$ (see subsection 2.2 for the definition), we construct some overconvergent automorphic vector bundles $\omega^{\dag\kappa}_w$ ($w\in\Q_{>0}$ is some rational number which depends on $\kappa$, see subsection 3.2) over (some admissible open subspaces inside the $p$-adic rigid analytification of) our Shimura varieties $X$ by applying the method in \cite{AIP}. These sheaves can be put in families when $\kappa$ varies in $\W$. In particular, we can construct $n$-dimensional eigenvarieties for the finite slope eigenforms over our Shimura varieties. An evident characteristic of the construction is that our eigenvarieties are partial, in the sense that we only let the weight at a fixed archimedean place vary. This is similar to the construction in \cite{Ch2}, but contrary to the total eigenvarieties as in \cite{Br1}.

An important assumption which we made in this paper is that the local reflex field is $\Q_p$. This is for having the density of the usual ordinary locus, cf. \cite{W} 1.6.3, so that we can apply the results on canonical subgroups of \cite{F2} Th\'eor\`eme 6 as in the way of Andreatta-Iovita-Pilloni. See Remark 6.5 for more explanation about the generalization.

We also study the analytic continuation of finite slope overconvergent eigenforms to prove that the classical points in our eigenvarieties are dense. As a consequence, there are some Galois pseudo-characters over the eigenvarieties. Let $L$ be a large extension of $\Q_p$ for the Shimura data, see page 4 of subsection 2.1 for more precise specification. The main theorem of the paper is the following.
\begin{theorem}
There is a rigid analytic variety $\mathcal{E}$ over $L$ and a locally finite map to the weight space $w:\mathcal{E}\ra\W$, such that
\begin{enumerate}
\item $\mathcal{E}$ is equidimensional of dimension $n$.
\item We have a character $\Theta: \mathbb{T}^{K^p}\otimes\mathbb{T}_p\ra \mathcal{O}(\mathcal{E})$. For any $\kappa\in\W$, $w^{-1}(\kappa)$ is in bijection with the eigensystems of $\mathbb{T}^{K^p}\otimes\mathbb{T}_p$ acting on the space of finite slope locally analytic overconvergent automorphic forms of weight $\kappa$.
\item For any $\kappa=(k_1,\dots,k_{n-1},k_n)\in \Z^{n-1}_+\times\Z\subset\W$, if $x\in w^{-1}(\kappa)$ satisfies  $v(\Theta_x(U_i))<k_{n-i}-k_{n-i+1}+1$ for $2\leq i\leq n-1$ and $v(\Theta_x(U_{1}))<k_n+k_{n-1}-n+1$, then the character $\Theta_x$ comes from a weight $\kappa$ automorphic eigenform on $X$. Here $\Theta_x$ is the composition of $\Theta$ with the evaluation map $ev_x: \mathcal{O}(\mathcal{E})\ra k(x)$ ($k(x)$ is the residue field of $x$).
\item There is a Galois pseudo-character $T: Gal(\ov{\Q}/F)\ra \mathcal{O}(\mathcal{E})^{red}$ ($F$ is some CM field, see subsection 2.1), such that for any point $x\in \mathcal{E}$, there is a continuous semi-simple representation $\rho_x: Gal(\ov{\Q}/F)\ra GL_n(\overline{k(x)})$ and the trace of this Galois representation is $T_x$. Here $\mathcal{O}(\mathcal{E})^{red}$ is the reduced algebra associated to $\mathcal{O}(\mathcal{E})$, and $T_x$ is the composition of $T$ with the evaluation map $ev_x: \mathcal{O}(\mathcal{E})^{red}\ra k(x)$.
\end{enumerate}
\end{theorem}
For the definition of the Hecke algebra $\mathbb{T}^{K^p}\otimes\mathbb{T}_p$ and the operators $U_i$ for $1\leq i\leq n-1$ see section 4. We hope that these eigenvarieties will have useful applications to Galois representations, as what those constructed by Chenevier for definite unitary groups have done.

This paper is organized as follows. In section 2, we introduce the related Shimura varieties and review automorphic vector bundles on them. In section 3, we first review the theory of canonical subgroups and the Hodge-Tate maps for them as in \cite{AIP} section 4, then we construct the overconvergent sheaves $\omega_w^{\dag\kappa}$ by proceeding in the same way as in loc. cit.. In section 4, we define the Hecke operators which act on the spaces of overconvergent automorphic forms. Then in section 5, we study analytic continuation of finite slope overconvergent eigenforms and prove the classicality theorem. In the last section we construct the $n$-dimensional eigenvarieties and prove the main theorem.\\
 \\
\textbf{Acknowledgments.} The tremendous debt that this paper owes to Andreatta-Iovita-Pilloni's work \cite{AIP} will be clear to the readers. I wish to acknowledge its serious impact on writing down this paper. I would like to thank Laurent Fargues sincerely for the discussion in the early stage of this work. I wish to thank Yiwen Ding, Peter Scholze, and Yichao Tian for useful discussions during the preparation of this paper. I would like to thank Fucheng Tan for useful comments. I should thank the referee for careful reading and useful suggestions. This work was supported by the SFB/TR 45 ``Periods, Moduli Spaces and Arithmetic of Algebraic Varieties'' of the DFG.

\section{Shimura varieties and automorphic vector bundles}
\subsection{Some unitary Shimura varieties}
In this section we introduce the Shimura varieties which will be the main object of study in this paper. These were studied by Harris-Taylor in \cite{HT} for proving the local Langlands correspondence for $GL_n$. For more details, see loc. cit. I.7, III.1, III.4.

Let $p$ be a prime number. Fix an imaginary quadratic field $E$ in which $p$ splits. The two primes of $E$ above $p$ will be denoted by $u$ and $u^c$, and the complex conjugation of $Gal(E/\Q)$ will be denoted by $c$. Let $F^+|\Q$ be a totally real field of degree $N$. Set $F=F^+E$, so that $F$ is a CM-field with maximal totally real subfield $F^+$. Let $\varpi=\varpi_1, \vp_2,\dots, \vp_r$ denote the primes of $F$ above $u$, and let $v=v_1,v_2,\dots,v_r$ denote their restrictions to $F^+$. We will denote the degrees of $F_{\vp_i}\simeq F^+_{v_i}$ by $d_i$ for $i=1,2,\dots,r$. Let $B/F$ denote a central division algebra of dimension $n^2$ over $F$ such that
\begin{itemize}
\item the opposite algebra $B^{op}$ is isomorphic to $B\otimes_{E,c}E$;
\item $B$ is split at $\vp$;
\item at any place $x$ of $F$ which is not split over $F^+$, $B_x$ is split (here and in the following $B_x=B\otimes F_x$);
\item at any place $x$ of $F$ which is split at $F^+$, either $B_x$ is split or $B_x$ is a division algebra;
\item if $n$ is even then $1+Nn/2$ is congruent modulo 2 to the number of places of $F^+$
above which $B$ is ramified.
\end{itemize}

As in \cite{HT} p.51, we can choose an involution of second kind $\ast$ on $B$. Moreover, we can choose some alternating pairing $\lan,\ran$ on $V\times V\ra \Q$ for the $B\otimes_FB^{op}$ module $V:=B$, which corresponds to another involution of second kind $\sharp$ on $B$. The associated reductive group $G/\Q$ is defined by
\[G(R)=\{(g,\lambda)\in (B^{op}\otimes_\Q R)^\times\times R^\times|\,gg^\sharp=\lambda\},\]
for any $\Q$-algebra $R$. Let $G_1$ be the kernel of the map $G\ra \mathbb{G}_m, (g,\lambda)\mapsto \lambda$, which can be viewed as a group over $F^+$. Choose a distinguished embedding $\tau_0: F^+\hookrightarrow \R$. As in Lemma I.7.1 of loc. cit. we can make the choice of the alternating pairing on $V\times V\ra \Q$ such that
\begin{itemize}
\item if $x$ is a rational prime which is not split in $E$, then $G$ is quasisplit at $x$,
\item if $\sigma: F^+\hookrightarrow \R$ is an embedding, then $G_1\times_{F^+,\tau}\R$ is isomorphic to the unitary group $U(1,n-1)$ if $\sigma=\tau_0$ and $U(n)$ otherwise.
\end{itemize}

We can say more about the group $G$. First, we have an isomorphism
\[Res_{E/\Q}G_E\simeq (B^{op}_\Q)^\times\times Res_{E/\Q}\mathbb{G}_m,\]which is an inner form of the quasi-split group $Res_{F/\Q}GL_n\times Res_{E/\Q}\mathbb{G}_m$. So the theory of automorphic representations for $G$ can be understood by those for $GL_n/F$ via the stable base change theorem of Clozel and Labesse, see 1.2.6 in \cite{H}. Second, the local reductive group at $p$ has the form
\[G_{\Q_p}\simeq \prod_{i=1}^r(B^{op}_{\vp_i})^\times \times \mathbb{G}_m.\]

Fix a maximal order $\Lambda_i=O_{B_{\vp_i}}$ in $B_{\vp_i}$ for each $i=1,2,\dots,r$. Our pairing on $V=B$ induces perfect duality between $V_{\vp_i}$ and $V_{\vp^c_i}$. Let $\Lambda^\vee \subset V_{\vp^c_i}$ be the dual of $\Lambda_i\subset V_{\vp_i}$. Then
\[\Lambda:=\bigoplus_{i=1}^r\Lambda_i\oplus\bigoplus_{i=1}^r\Lambda_i^\vee\] is a $\Z_p$-lattice in $V\otimes_\Q\Q_p$ and we have a perfect pairing $\Lambda\times\Lambda\ra\Q_p$. Let $O_B\subset B$ be the unique maximal $\Z_{(p)}$-order such that $O_B^\ast=O_B$ and $O_{B,\vp_i}=O_{B_{\vp_i}}$ for $i=1,2,\dots,r$. Fix an isomorphism $O_{B_\vp}\simeq M_n(O_{F_\vp})$. Let $\varepsilon=(\varepsilon_{ij})\in M_n(O_{F_\vp})$ be the idempotent with $\varepsilon_{11}=1$ and all the other $\varepsilon_{ij}=0$. Then $\Lambda_{11}:=\varepsilon\Lambda_1\simeq (O_{F_\vp}^n)^\vee$, and
\[\Lambda\simeq ((O_{F_\vp}^n\otimes\Lambda_{11})\oplus (O_{F_\vp}^n\otimes\Lambda_{11})^\vee)\oplus \bigoplus_{i=2}^r(\Lambda_i\oplus\Lambda_i^\vee).\]

Let $K\subset G(\A_f)$ be a sufficiently small open compact subgroup. Then we have a projective Shimura variety $Sh_K$ over $F$, which is a moduli space of abelian varieties with additional structures. More precisely, for any connected locally noetherian $F$-scheme $S$, $Sh_K(S)=\{(A,\lambda,\iota,\bar{\eta})\}/\simeq$ where
\begin{itemize}
\item $A/S$ is an abelian scheme of dimension $Nn^2$;
\item $\lambda: A\ra A^\vee$ is a polarization;
\item $\iota: B\ra End(A)\otimes\Q$ is an action such that $\lambda\circ\iota(b)=\iota(b^\ast)^\vee\circ\lambda$ for all $b\in B$ and $(A,\iota)$ is compatible (cf. \cite{HT} Lemma III.1.2);
\item $\bar{\eta}$ is a level structure $\eta: V\otimes\A_f\ra V_f(A)\,(mod\,K)$.
\end{itemize}
Let $L|\Q_p$ be a finite extension which is large enough so that it contains all the embeddings of $F$ into $\ov{\Q}_p$ and it splits all $B_{\vp_i}$ for $2\leq i\leq r$. We fix an embedding $F_\vp\subset L$ and still denote $Sh_K$ the above variety base changed to $L$ via the fixed embedding by abuse of notation. Assume $K$ has the form $K=K^pK_p\subset G(\A_f^p)\times G(\Q_p)$. If $K_p=\prod_{i=1}^rO_{B^{op}_{\vp_i}}^\times\times\Z_p^\times$, then one can define a proper smooth intergral model $S_K$ of $Sh_K$ over $O_L$ by considering a similar integral modular problem. In fact, let $m=(m_1,\dots,m_r)\in\Z^r_{\geq 0}$, and $K^p(m)$ be the product
\[K^p\times\prod_{i=1}^rker(O_{B_{\vp_i}^{op}}^\times\ra(O_{B_{\vp_i}^{op}} /\vp_i^{m_i})^\times)\times\Z^\times_p,\]
then by introducing the notion of Drinfeld level structures at $p$, one can define a proper flat regular model $S_{K^p(m)}$ of $Sh_{K^p(m)}$ over $O_L$. For $K^p$ and $m$ varying, the group $G(\A_f)$ acts as Hecke correspondences on the tower $(S_{K^p(m)})_{K^p,m}$. Over $\Cm$, $Sh_{K,\Cm}$ is a disjoint union of $ker^1(\Q,G)$ copies of the PEL unitary Shimura variety $Sh_K(G,X)$ associated to the corresponding Shimura data. By abuse of terminology, we will call these moduli spaces for $K$ varying Shimura varieties.

\subsection{Automorphic vector bundles}
Let $(m_2,\dots,m_r)\in \Z^{r-1}_{\geq 0}$ and $K^p$ be fixed. We will be interested in the levels $K_0=K^p(0,m_2,\dots,m_r)$ and \[K=K^p\times Iw\times\prod_{i=2}^rker(O_{B_{\vp_i}^{op}}^\times\ra(O_{B_{\vp_i}^{op}} /\vp_i^{m_i})^\times)\times\Z_p^\times\subset K^p(1,m_2,\dots,m_r),\] where $Iw\subset GL_n(O_{F_\vp})$ denotes the Iwahori subgroup. To simplify notations, let $X:=Sh_K, Y:=Sh_{K_0}$ and $\ov{X}$ (resp. $\ov{Y}$) be the special and generic fibers of $S_K$ (resp. $S_{K_0}$). We know that $S_{K_0}$ is smooth (\cite{HT} Lemma III.4.1), and $S_K$ has strictly semi-stable reduction (\cite{TY}, Proposition 3.4). We will study automorphic vector bundles on $X$ and the related theory of automorphic forms. For this we will need some preparation.

Let $\Sigma=Hom(F^+,L)$. Then it is bijective to $Hom(F^+,\R)$, after fixing an embedding $L\hookrightarrow \ov{\Q}_p$ and an isomorphism $\ov{\Q}_p\simeq \Cm$. Let $\tau\in\Sigma$ be the element which corresponds to $\tau_0$ under this bijection for the fixed $\tau_0: F^+\hookrightarrow\R$ in the last subsection. We can and we do take the embedding and isomorphism above such that $\tau$ comes from an element in $Hom(F_\vp, L)$.
Consider the Levi subgroup $M$ of \[G_{F_\vp}\simeq \G_m\times \prod_{\sigma:F_\vp\hookrightarrow L}GL_n\times\prod_{i=2}^r(B^{op}_{\vp_i})^\times_{F_\vp}\] defined by \[M=\G_m\times GL_{n-1}\times GL_1\times \prod_{\sigma\neq\tau:F_\vp\hookrightarrow L}GL_n\times\prod_{i=2}^r(B^{op}_{\vp_i})^\times_{F_\vp}.\]
Then over $L$, we have the isomorphisms
\[\begin{split}&G_L\simeq \G_m\times\prod_{\sigma\in\Sigma}GL_n\\
&M_L\simeq \G_m\times GL_{n-1}\times GL_1\times \prod_{\sigma\neq\tau: F^+\hookrightarrow L}GL_n.\end{split}\]Take a maximal torus $T\subset M_L\subset G_L$, and choose a Borel subgroup $T\subset B\subset G_L$. Then we have an isomorphism of dominant weight (of  $B\cap M_L$)
\[
X^\ast(T)^{M_L}_+\simeq\Z\times\Z^{n-1}_+\times\Z\times\prod_{\sigma\neq\tau}\Z^n_+.\] Here and in the following, for any positive integer $k$, $\Z^k_+=\{(a_1,\dots,a_k)\in \Z^k|\; a_1\geq \cdots\geq a_k\}$. Therefore, each irreducible representation $\rho$ of $M_L$ corresponds to  $(b_0,\overrightarrow{b_{\tau}},b_{\tau},(\overrightarrow{b_{\sigma}})_{\sigma\neq\tau})$, with  $\overrightarrow{b_{\tau}}=(b_{1\tau},\dots,b_{(n-1)\tau})\in\Z^{n-1}_+, \overrightarrow{b_{\sigma}}=(b_{1\sigma},\dots,b_{n\sigma})\in\Z^n_+$, and this gives arise to an automorphic vector bundle $\mathcal{V}_\rho$ on $X$.

Let $A/S_K$ be the universal abelian scheme with unit section $e: S_K\ra A$. Recall $S_K$ is the integral Shimura varieties over $O_L$ with generic fiber $X$. Let $\omega:=\varepsilon(e^\ast\Omega^1_{A/S_K})$, where $\varepsilon=(\varepsilon_i^+\oplus\varepsilon^-_i)$, for each $i=1,\dots,r$ both $\varepsilon_i^+$ and $\varepsilon_i^-$ are the idempotent matrices with all $(k,j)\neq(1,1)$-elements 0 and 1 for the (1,1) element. We have the decomposition
\[A[p^\infty]=A[\varpi_1^\infty]\oplus\cdots\oplus A[\varpi_r^\infty]\oplus A[\varpi_1^{c,\infty}]\oplus\cdots\oplus A[\varpi_r^{c,\infty}],\]with $A[\varpi_i^\infty]$ ind-\'etale for $2\leq i\leq r$, and $A[\varpi_i^\infty]^D\simeq A[\varpi_i^{c,\infty}]$ for all $1\leq i\leq r$. This induces a decomposition
\[\omega=\omega_1\oplus\cdots\oplus\omega_r\oplus\omega_1^c\oplus\cdots\oplus\omega_r^c,\]
with $\omega_i$ comes from the sheaf of invariant differentials of $A[\varpi_i^{\infty}]$ and $\omega_i^c$ comes from the sheaf of invariant differentials of $A[\varpi_i^{c,\infty}]$. This is also the decomposition of $\omega$ under the action of $O_F\otimes\Z_p$. Thus $\omega_2=\cdots=\omega_r=0$, rank$\omega_i^c=d_in, i\geq 2$, rank$\omega_1^c=d_1n-1$, rank$\omega_1=1$. For $1\leq i\leq r$, let $\Sigma_i=Hom(F_{\vp_i},L)$. From the next subsection we will assume $F_{\vp_1}=\Q_p$. Then $\Sigma_1=\{\tau\}$, and $\Sigma=\Sigma_1\sqcup\cdots\sqcup \Sigma_r$. If we consider the action of $O_F\otimes O_L$ on $\omega$, we have further decompositions
\[\begin{split}\omega&=(\omega_1\oplus\omega_1^c)\oplus\omega_2^c\oplus\cdots\oplus\omega_r^c\\
&=\bigoplus_{\sigma\in\Sigma_1}\omega_\sigma\oplus\bigoplus_{\sigma\in\Sigma_2}\omega_\sigma\oplus\cdots\oplus
\bigoplus_{\sigma\in\Sigma_r}\omega_\sigma\\
&=\omega_{\tau}\oplus\bigoplus_{\sigma\in\Sigma,\sigma\neq\tau}\omega_\sigma,\end{split}\]
where $\omega_{\tau}=\omega_{\tau}^1\oplus\omega_{\tau}^2, \omega_{\tau}^1=\omega_1$ is of rank one, and rank$\omega_{\tau}^2=n-1$. For later use, we make the following definition.
\begin{definition} Let $H=\varepsilon_1^+A[\vp^\infty]$, which is a $p$-divisible group of dimension one and height $n$ over $S_K$. For any scheme $S$ over $O_L$, if $x: S\ra S_K$ is an $S$-valued point of $S_K$, we denote $H_x$ the corresponding base change of $H$ to $S$.
\end{definition}
We have similar objects over $\ov{X}$, $S_{K_0}$ and $\ov{Y}$. In the following, we will also use $H$ to denote a $p$-divisible group of dimension one and height $n$ over a suitable base. The precise meaning of the group $H$ will be clear in the context. By abuse of notation we also denote the restriction of $\omega$ over $X$ by $\omega$, which can be defined directly in the same way.

Consider the subgroup $M'=GL_{n-1}\times GL_1\times\prod_{\sigma\neq\tau}GL_n\subset M_L$, then every irreducible representation of $M'$ can be viewed as an irreducible representation of $M_L$ via the natural projection $M_L\twoheadrightarrow M'$. In the following we will concentrate on these representations and the associated automorphic vector bundles. Let \[\mathcal{T}=Isom(\omega,\mathcal{O}_X^{n-1}\oplus \mathcal{O}_X\oplus\bigoplus_{\sigma\in\Sigma,\sigma\neq\tau}\mathcal{O}_X^n),\] which is a $M'$-torsor over $X$. Denote by $\pi$ the projection $\mathcal{T}\ra X$. For $\kappa=(k_{\tau,1},\dots,k_{\tau,n-1},k_\tau$, $(\overrightarrow{k_{\sigma}})_{\sigma\neq\tau})\in\Z^{n-1}_+\times\Z
\times\prod_{\sigma\neq\tau}\Z_+^n$, let
$\kappa'=(-k_{\tau,n-1},\dots,-k_{\tau,1},k_\tau,(\overrightarrow{k_\sigma})_{\sigma\neq\tau})$.
We define a coherent sheaf
\[\omega^\kappa:=\pi_\ast O_{\mathcal{T}}[\kappa'],\]
which as an automorphic vector bundle corresponds to the irreducible representation $V(\kappa)$ of $M'$ of highest weight $\kappa$. Conversely, any automorphic vector bundle corresponding to an irreducible representation $V(\kappa)$ of $M'$ of highest weight $\kappa$ comes in this way. We note that $V(\kappa)=V_{\kappa'}$ by the notation of \cite{AIP} 2.1: 
\[V_{\kappa'}:=\{f: M'\ra \A^1|\, f(gb)=\kappa'(b)f(g),\, \forall (g,b)\in M'\times (B\cap M')\}. \]
In the above we have denoted $\omega^\kappa=\mathcal{V}_{\kappa}$.
\begin{definition}We call the elements of $H^0(X,\omega^\kappa)$ automorphic forms of weight $\kappa$ over $X$.
\end{definition}
By definition, an element $f\in H^0(X,\omega^\kappa)$ is a functorially defined function $\mathcal{T}(R)\ra R$ for any $L$-algebra $R$, such that for all $b\in B_{M'}:=B\cap M'$ we have \[f(A,\lambda,\iota,\ov{\eta}, ((v_\tau^+,v_\tau^-),(v_\sigma)_{\sigma\neq\tau})\circ b)=\kappa'(b)f(A,\lambda,\iota,\ov{\eta}, ((v_\tau^+,v_\tau^-),(v_\sigma)_{\sigma\neq\tau})),\]
where $(A,\lambda,\iota,\ov{\eta}, ((v_\tau^+,v_\tau^-),(v_\sigma)_{\sigma\neq\tau}))$ is a $Spec(R)$- valued point of $\mathcal{T}$.

The sheaf $\omega^\kappa$ has a decomposition as tensor product
\[\omega^\kappa=\omega^\kappa_\tau\otimes\bigotimes_{\sigma\neq\tau}\omega^\kappa_\sigma,\]which corresponds to the factors of $\kappa=(\kappa_\tau,(\kappa_\sigma)_{\sigma\neq\tau})$.
Let $T'=T_\tau\times\prod_{\sigma\in\Sigma,\sigma\neq\tau}T_\sigma$ be the maximal torus of $M'$. Let $(\overrightarrow{k_\sigma})_{\sigma\neq\tau}(=(\kappa_\sigma)_{\sigma\neq\tau})\in\prod_{\sigma\neq\tau}\Z^{n}_+$ be fixed. Under the assumption $F_{\vp}=\Q_p$, $T_\tau$ is defined over $\Q_p$. Let $\W$ be the rigid analytic space over $L$ associated to the Iwasawa algebra $O_L[[T_\tau(\Z_p)]]$, then it is defined over $\Q_p$. Its points have the following explanation: for any affinoid algebra $A$ over $L$, $\W(A)$ is the set of continuous hommorphisms $Hom(T_\tau(\Z_p), A^\times)$. We have an embedding:
\[\begin{split}\W(\Cm_p)=Hom(T_\tau(\Z_p),\Cm_p^\times)&\hookrightarrow Hom(\prod_{\sigma\in\Sigma}T_\sigma,\Cm_p^\times)\\
\kappa_\tau&\longmapsto (\kappa_\tau,(\overrightarrow{k_\sigma})_{\sigma\neq\tau}).\end{split}\]We will identify $\W(\Cm_p)$ with its image under the above embedding, and call it the weight space. In particular, it contains the set of integral weights $\Z^{n-1}_+\times\Z\times(\overrightarrow{k_\sigma})_{\sigma\neq\tau}$. The goal of this paper is to put the automorphic bundles associated to integral weights into a family. More precisely, we will define a sheaf
$\omega^{\kappa_\tau}$ for any $\kappa_\tau\in\W$ over the rigid analytic Shimura variety $X^{rig}$ associated to $X$ over $L$, which after tensor product with $\otimes_{\sigma\neq\tau}\omega^\kappa_\sigma$ interpolates the classical automorphic bundles in some suitable sense. In fact, we can only define these sheaves over some admissible opens of $X^{rig}$. So we need better understanding of the geometry of $X^{rig}$, which is closely related to the geometry of the special fiber $\ov{X}$. Recall that by GAGA, $H^0(X,\omega^\kappa)=H^0(X^{rig},\omega^\kappa)$ for the integral weight $\kappa$.

\subsection{The geometry of special fibers and rigid generic fibers}
Before going further into the theory of automorphic forms, we shall review some basic geometric facts on the special fibers and rigid generic fibers of $S_{K_0}$ and $S_K$.

The Newton stratification of $\ov{Y}$ was studied in detail in the section III.4 of \cite{HT}. Recall this stratification
\[\ov{Y}=\coprod_{h=0}^{n-1}\ov{Y}^h,\]
where a point $x\in\ov{Y}^h$ if and only if $htH_x^{et}=h$. Here $H_x^{et}$ is the \'etale part of one dimensional $p$-divisible group $H_x$ associated to $x$. Here and in the following, for a $p$-divisible group $H$ over some suitable base, we denote by $htH$ the height of $H$. We have $dim\ov{Y}^h=h$. In particular $\ov{Y}^{n-1}$ is the $\mu$-ordinary locus, which is open and dense in $\ov{Y}$. The usual ordinary locus of $\ov{Y}$ is non empty if and only if $F_\vp=\Q_p$ (\cite{W}, 1.6.3), in which case it is also open and dense. 
\begin{assumption}
In the following of this paper, we will assume $F_\vp=\Q_p$ to use the theory of canonical subgroups.
\end{assumption}
See remark 6.5 in the final section for some general cases. Consider the rigid analytic space $Y^{rig}$ associated to $Y$. 
\begin{definition}
 We have the stratification of rigid analytic spaces over $L$
\[Y^{rig}=\coprod_{h=0}^{n-1}]\ov{Y}^h[,\]
here $]\ov{Y}^h[\subset Y^{rig}$ is the tube over $\ov{Y}^h$. 
\end{definition}
For a point $x\in ]\ov{Y}^h[$, we have the associated one dimensional $p$-divisible group $H_x/O_{k(x)}$ with $htH_x=n$. Here and in the following $k(x)$ is the residue field of $x$ and $O_{k(x)}$ is the integer ring. We have the local-\'etale exact sequence of $p$-divisible groups over $O_{k(x)}$:
\[0\ra H_x^0\ra H_x\ra H_x^{et}\ra 0,\]
with $htH_x^0=n-h, htH_x^{et}=h$.

Recall that $S_{K}$ parameterizes the set of total flags of $H[p]$, where $H/S_{K_0}$ is the universal one dimensional $p$-divisible group over $S_{K_0}$, see \cite{TY} section 3. As in the last subsection, let $X^{rig}$ be the rigid analytic space associated to $X$ over $L$. Now for the varieties $\ov{X}, X^{rig}$, we have also the Newton stratification
\[\ov{X}=\coprod_{h=0}^{n-1}\ov{X}^h, X^{rig}=\coprod_{h=0}^{n-1}]\ov{X}^h[.\]
In our special unitary case of signature $(1,n-1)\times(0,n)\times\cdots\times(0,n)$, the Newton stratification and the $p$-rank stratification coincide.
As in \cite{TY} Proposition 3.4 (3), we know that the irreducible components of $\ov{X}$ are
\[X_i=\{x\in\ov{X}|\,\textrm{Fil}_{ix} \,\textrm{is connected}\,\}\]for the filtration $0\subsetneq \textrm{Fil}_{1x}\subsetneq\cdots\subsetneq \textrm{Fil}_{(n-1)x}\subsetneq H_x[p]$. For any non empty subset $S\subset\{1,\dots,n\}$, let $X_S=\bigcap_{i\in S}X_i, X_S^0=X_S-\bigcup_{i\in S,j\in \{1,\dots,n\}-S}(X_i\cap X_j)$, then by loc. cit.
\[\ov{X}^h=\coprod_{\sharp S=n-h}X_S^0.\]
In particular, $]\ov{X}^{n-1}[=\coprod_{i=1}^n]X_i^0[$. The locus
\[]X_1^0[\subset ]\ov{X}^{n-1}[\]is sometimes called ``multiplicative-ordinary'' locus in our case $F_\vp=\Q_p$.

\section{Overconvergent automorphic forms}

\subsection{Canonical subgroups and applications}

Let the notation and assumption be as before. Let $H/\ov{Y}$ be the universal $p$-divisible group of dimension one over the special fiber $\ov{Y}$. Then we have the Frobenius and Verschiebung morphisms $F: H\ra H^{(p)}, V: H^{(p)}\ra H$. Let \[V^\ast: \omega_{H}\ra \omega_{H}^{\otimes p}\]be the induced morphism on cotangent bundles. Then the determinant of $V^\ast$ defines a section $Ha\in H^0(\ov{Y},\omega_{H}^{\otimes(p-1)})$. We know that
the non-vanishing locus of $Ha$ is the ordinary locus $\ov{Y}^{n-1}$.

Let $K$ be a complete valued extension of $\Q_p$ for a valuation $v: K\ra\R\cup\{\infty\}$ such that $v(p)=1$ (Here and in the following, since the level is fixed, by abuse of notation $K$ will denote a complete extension of $\Q_p$. In any case, the precise meaning should be clear from the context.). We denote by $O_K$ the ring of integers of $K$ and let $v: O_K/pO_K\ra [0,1]$ be the truncated valuation. For any $w\in v(O_K)$ we set $\mathfrak{m}(w)=\{x\in K|\,v(x)\geq w\}$ and $O_{K,w}=O_K/\mathfrak{m}(w)$. Let $H/O_K$ be a $p$-divisible of dimension one, and $G=H[p]$ be the truncated $p$-divisible group of level one. Consider the group $G_{O_K/pO_K}$. Then we have an element $Ha(H) \in \omega_{G}^{\otimes(p-1)}$ in the $O_{K,1}$-module
$\omega_{G}^{\otimes(p-1)}$. Let $Hdg(H):=v(Ha(H))\in [0,1]$. Recall that we have the following theorem.
\begin{theorem}[\cite{F2}, Th\'eor\`eme 6]
Let $m\geq1$ be an integer. Assume that $Hdg(H)<\frac{1}{2p^{m-1}} (resp. \, \frac{1}{3p^{m-1}}\, if p=3)$. Then the first step of the Harder-Narasimhan filtration of $H[p^m]$, denoted by $C_m$, is called the canonical subgroup of level $m$ of $H$. It has the following properties.
\begin{enumerate}
\item $C_m(\ov{K})\simeq \Z/p^m\Z$.
\item $deg C_m=1-\frac{p^m-1}{p-1}Hdg(H)$.
\item For any $1\leq k< m$, $C_m[p^k]$ is the canonical subgroup of level $k$ of $H$.
\item In $H_{O_{K,1-Hdg(H)}}$ we have that $C_{1O_{K,1-Hdg(H)}}$ is the kernel of Frobenius.
\item For any $1\leq k<m, Hdg(H/C_k)=p^kHdg(H)$ and $C_m/C_k$ is the canonical subgroup of level $m-k$ of $H/C_k$.
\item Let $H^D$ be the Cartier-Serre dual of $H$. Denote by $C_m^\bot$ the annihilator of $C_m$ under the natural pairing $H[p^m]\times H^D[p^m]\ra \mu_{p^m}$. Then $Hdg(H^D)=Hdg(H)$ and $C_m^\bot$ is the canonical subgroup of level $m$ of $H^D$.
\end{enumerate}
\end{theorem}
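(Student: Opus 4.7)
My approach will follow Fargues and proceed by induction on $m$, based on the Harder--Narasimhan (HN) theory for finite flat commutative group schemes over $O_K$. The key tool is the degree function on such group schemes, which is additive in short exact sequences and whose associated slope function $\mu(G) = \deg G / ht(G)$ produces a canonical HN polygon. The plan is to construct $C_1$ first as the top step of the HN filtration of $H[p]$, and then iterate on successive isogeny quotients $H \to H/C_1 \to (H/C_1)/C_1' \to \cdots$ to build up $C_m$.

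For the base case $m = 1$, I would analyze the HN polygon of $G = H[p]$, a finite flat group scheme of order $p^n$. The dimension-one hypothesis on $H$ pins down the total degree of $G$, and via the integral Hodge--Tate map one constructs a rank-one sub-group-scheme $C_1 \subset G$ whose degree is computed in terms of $Hdg(H)$, with $\mu(C_1) = 1 - Hdg(H)$ and $\mu(G/C_1) = Hdg(H)/(n-1)$. The numerical hypothesis $Hdg(H) < 1/2$ (or $< 1/3$ if $p = 3$) ensures the strict inequality $\mu(C_1) > \mu(G/C_1)$, so that $C_1$ is the unique non-trivial break of the HN polygon and is therefore canonically determined. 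A direct computation in the reduction modulo $p^{1 - Hdg(H)}$ identifies $C_1$ with the kernel of the relative Frobenius on $H$ at that truncation, yielding property (4).

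For the inductive step, assume the statement for level $m-1$ and take $H$ with $Hdg(H) < 1/(2p^{m-1})$. Apply the base case to produce $C_1 \subset H[p]$. The quotient $H' = H/C_1$ is again a $p$-divisible group of dimension one, and tracing through the description of $C_1$ as the Frobenius kernel in characteristic $p^{1 - Hdg(H)}$ gives $Hdg(H') = p \cdot Hdg(H) < 1/(2p^{m-2})$. The inductive hypothesis then furnishes a canonical subgroup $D_{m-1} \subset H'[p^{m-1}]$, and I define $C_m \subset H[p^m]$ as its preimage under the natural projection $H[p^m] \twoheadrightarrow H'[p^{m-1}]$. The resulting extension $0 \to C_1 \to C_m \to D_{m-1} \to 0$ realized inside the $p$-divisible group $H$ forces $C_m(\ov{K}) \simeq \Z/p^m\Z$, giving (1). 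Iterating additivity of $\deg$ together with the multiplicative behaviour of $Hdg$ under the successive quotients yields the closed formula for $\deg C_m$ in (2), and (3) and (5) follow directly from the construction combined with the inductive hypothesis. Property (6) follows from the compatibility of the HN filtration with Cartier--Serre duality: under the pairing $H[p^m] \times H^D[p^m] \to \mu_{p^m}$, the orthogonal complement of the top HN step on one side is the top HN step on the other, and the identity $Hdg(H^D) = Hdg(H)$ (traced from $V \circ F = p = F \circ V$) shows that $C_m^\bot$ plays the role of the canonical subgroup of $H^D$.

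The main obstacle will be the base case: producing $C_1$ as a genuine HN step with the correct degree and identifying its reduction with the Frobenius kernel. This relies on a precise comparison between the Hasse invariant, the valuation of the integral Hodge--Tate map, and the HN slope function, and the numerical bound $Hdg(H) < 1/2$ (or $< 1/3$ for $p = 3$) is exactly what guarantees positivity of the slope gap and hence canonicity. Once the base case is firmly in hand, the induction is largely formal: properties (2) and (5) reduce to accounting calculations propagating the identity $Hdg(H) \mapsto p \cdot Hdg(H)$ under the isogeny $H \twoheadrightarrow H/C_1$.
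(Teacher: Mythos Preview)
The paper does not prove this theorem at all: it is stated as a quotation of \cite{F2}, Th\'eor\`eme~6, with no argument given. So there is no ``paper's own proof'' to compare your proposal against. Your outline is a reasonable sketch of Fargues's strategy (Harder--Narasimhan filtration, induction on $m$ via the quotient $H\to H/C_1$, and the identity $Hdg(H/C_1)=p\cdot Hdg(H)$), and for the purposes of this paper nothing more is required than the citation.

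That said, two points in your sketch would need tightening if you were actually writing out the proof. First, you define $C_m$ inductively as the preimage of $D_{m-1}$ under $H[p^m]\twoheadrightarrow (H/C_1)[p^{m-1}]$, but the theorem asserts that $C_m$ is the \emph{first step of the HN filtration} of $H[p^m]$; you would need to check that your inductive preimage really coincides with that HN step, which requires a slope computation for $H[p^m]$ itself rather than only for $H[p]$. Second, the claim that the extension $0\to C_1\to C_m\to D_{m-1}\to 0$ ``forces $C_m(\ov K)\simeq \Z/p^m\Z$'' is not automatic from being inside a $p$-divisible group (a priori the extension of $\Z/p^{m-1}\Z$ by $\Z/p\Z$ could split); one uses that multiplication by $p$ on $H[p^m]$ has image $H[p^{m-1}]$ and carries $C_m$ onto $C_{m-1}$, together with the inductive hypothesis, to rule out the split case. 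These are the genuine technical points in Fargues's argument; the rest of your plan is sound.
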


Let $\mathcal{Y}/SpfO_L$ be the formal completion of $S_{K_0}$ along the special fiber $\ov{Y}$. Since the variety $Y$ is proper, we have $\mathcal{Y}^{rig}=Y^{rig}$.  Then by \cite{F2} 2.2.2 there is a continuous function
\[Hdg: Y^{rig}\ra [0,1]\]such that for any point $x\in Y^{rig}, Hdg(x)=Hdg(H_x)$. Here $H_x$ is the $p$-divisible group over $O_{k(x)}$ associated to $x$. By our construction, we have
\[]\ov{Y}^{n-1}[=Hdg^{-1}(0),\]and for any $\varepsilon\in [0,1]\cap v(\ov{L}^\times)$,
\[Y(\varepsilon):=Hdg^{-1}([0,\varepsilon])\] is an admissible open subset of $Y^{rig}$, which is a strict neighborhood of $]\ov{Y}^{n-1}[$.

As in \cite{AIP} 4.1, we let \textbf{Adm} be the category of admissible $O_L$-algebras, i.e. flat $O_L$-algebras which are quotients of rings of restricted power series $O_L\langle X_1,\dots,X_r\rangle$ for some $r>0$. Let \textbf{NAdm} be the category of normal admissible $O_L$-algebras. For any object $R$ of \textbf{Adm}, we let $R$-\textbf{Adm} be the category of $R$-algebras which are admissible as $O_L$-algebras. We define similarly $R$-\textbf{NAdm}.

Fix an object $R$ of \textbf{NAdm}. Let $S=Spec R$ and $S^{rig}$ be the rigid analytic space associated to the formal scheme $\wh{S}:=Spf R$. Let $H/S$ be a $p$-divisible group of dimension one and constant height $n$. Assume that there is a $v<\frac{1}{2p^{m-1}}$ (resp. $v<\frac{1}{3p^{m-1}}$ if $p=3$) such that for any $x\in S^{rig}, Hdg(x)<v$. Then for any point $x\in S^{rig}$, $H_x$ has a canonical subgroup of level $m$. By the properties of the Harder-Narasimhan filtration, there is a finite flat subgroup $C_{m,L}\subset H_{S^{rig}}$ interpolating the canonical subgroups of level $m$ for all the points $x\in S^{rig}$. If $w\in v(O_L)$, we set $R_w=R\otimes O_{L,w}$ and for any $R$-module $M$, set $M_w=M\otimes R_w$.
\begin{proposition}[\cite{AIP}, Prop.4.1.3, 4.2.1, 4.2.2]
\begin{enumerate}
\item The subgroup $C_{m,L}$ extends to a finite flat subgroup scheme $C_m\subset H[p^m]$ over $S$.
\item Let $w\in v(O_L)$ with $w\leq m-v\frac{p^m-1}{p-1}$. The morphism of coherent sheaves $\omega_{H[p^m]}\ra \omega_{C_m}$ induces an isomorphism $\omega_{H[p^m],w}\ra\omega_{C_m,w}$.
\item Assume $C_m^D(R)\simeq \Z/p^m\Z$. Then the cokernel of the linearized Hodge-Tate map
\[HT_{C_m^D}\otimes1: C_m^D(R)\otimes R\ra \omega_{C_m}\] is killed by $p^{\frac{v}{p-1}}$.
\end{enumerate}
\end{proposition}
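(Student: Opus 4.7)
The plan is to reproduce, in our normal admissible setting, the three results of \cite{AIP} that are being quoted; the proofs depend only on having a one-dimensional $p$-divisible group $H/R$ of constant height $n$ with $Hdg<v$, not on the unitary Shimura structure.

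For (1) I would define $C_m$ as the schematic closure of the rigid-analytic subgroup $C_{m,L}\subset H[p^m]_{S^{rig}}$ inside the finite flat $R$-group $H[p^m]$. Normality of $R$ implies that this closure is $R$-flat, and closedness inside a finite scheme gives finiteness; the subgroup-scheme structure extends by schematic density of the generic fibre in $H[p^m]$ (which is $R$-flat, hence $p$-torsion-free). Alternatively one may invoke Fargues's formal construction, which produces $C_m$ directly from the Harder--Narasimhan polygon of $H[p^m]$ over $R$, bypassing the rigid generic fibre.

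For (2) I would use the short exact sequence $0\to C_m\to H[p^m]\to H[p^m]/C_m\to 0$ to obtain
\[\omega_{H[p^m]/C_m}\ra \omega_{H[p^m]}\ra \omega_{C_m}\ra 0,\]
and bound the image of the left arrow. The key input is Fargues's degree formalism: by Theorem 3.1(2) one has $\deg(H[p^m]/C_m)=v\tfrac{p^m-1}{p-1}$, and for any finite flat group scheme $G/R$ the conormal sheaf $\omega_G$ is annihilated by $p^{\deg G}$ (its determinant having valuation $\deg G$ at each point). Consequently the image of $\omega_{H[p^m]/C_m}$ in the $R/p^m$-free module $\omega_{H[p^m]}$ lies in $\mathfrak{m}(w)\omega_{H[p^m]}$ as soon as $w\le m-v\tfrac{p^m-1}{p-1}$, which yields the claimed isomorphism after reduction modulo $\mathfrak{m}(w)$.

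For (3) the linearised Hodge--Tate map sends $\phi\otimes 1\mapsto \phi^\ast(dT/T)$. Using the compatibility $C_m[p]=C_1$ from Theorem 3.1(3) together with iteration along the filtration $C_1\subset C_2\subset\cdots\subset C_m$, one reduces to the level-one case, where a direct computation on the Dieudonn\'e module of $C_1$ (an almost-ordinary truncated BT of level one with $Hdg=v$) shows that the cokernel is killed by $p^{v/(p-1)}$; the exponent $v/(p-1)$ appears as the Sen-style discrepancy between the Hodge filtration and its image under Hodge--Tate. The main obstacle is exactly (3): the sharp exponent requires the genuine computation of Fargues on Hodge--Tate maps of almost-ordinary Barsotti--Tate groups, whereas (1) and (2) are essentially formal consequences of the formalism of degrees once the canonical subgroup has been produced.
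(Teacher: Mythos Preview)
The paper does not give its own proof of this proposition: it is simply quoted from \cite{AIP} (Propositions 4.1.3, 4.2.1, 4.2.2) and used as a black box. So there is nothing in the paper to compare your argument against; what you have written is essentially a sketch of the proofs in \cite{AIP}, and it is broadly correct.

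One point in your treatment of (2) deserves tightening. The assertion ``$\omega_G$ is annihilated by $p^{\deg G}$'' is a pointwise statement (degree is a function on rigid points, not a number), and by itself it does not directly bound the image of $\omega_{H[p^m]/C_m}$ in $\omega_{H[p^m]}$. The clean argument runs as follows. Since $H$ has dimension one, $\omega_{H[p^m]}$ is locally free of rank one over $R/p^m$; hence the kernel $K=\ker(\omega_{H[p^m]}\to\omega_{C_m})$ is, after local trivialization, an ideal of $R/p^m$. At every rigid point $x$ one has $\deg C_{m,x}\ge m-v\tfrac{p^m-1}{p-1}$, so the length of $K_x$ is at most $v\tfrac{p^m-1}{p-1}$, i.e.\ any lift $f\in R$ of a generator of $K$ satisfies $v_x(f)\ge m-v\tfrac{p^m-1}{p-1}$ for all $x$. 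Now normality of $R$ (the maximum modulus principle for normal affinoids) converts this pointwise bound into $f\in\mathfrak m(w)R$ for $w\le m-v\tfrac{p^m-1}{p-1}$, which gives the claimed isomorphism modulo $\mathfrak m(w)$. Your shortcut via ``$p^{\deg G}$ kills $\omega_G$'' followed by ``torsion in a free $R/p^m$-module lies deep'' encodes the same idea, but you should make explicit that it is the rank-one freeness of $\omega_{H[p^m]}$ together with normality of $R$ that lets the pointwise degree bound globalize. Parts (1) and (3) of your sketch are fine as written; you are right that (3) is where the real content lies.
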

Similarly, we have a finite flat subgroup scheme $C_m^\bot\subset H^D[p^m]$ over $S$ with the same properties as in the above (2) and (3), whose generic fiber interpolates the canonical subgroups of level $m$ in $H^D_x$ for all points $x\in S^{rig}$.

We fix a rational number $v$ such that $v<\frac{1}{2p^{m-1}}$ (resp. $v<\frac{1}{3p^{m-1}}$ if $p=3$) with the property that for any point $x\in S^{rig}, Hdg(x)<v$. Let $C_m$ denote the canonical subgroup of $H$ of level $m$ over $S$. Consider the dual, we have the canonical subgroup $C^\bot_m$ of $H^D$ of level $m$ over $S$. In the following, we will assume that $C_m^D(R)\simeq \Z/p^m\Z, (C^\bot_m)^D(R)\simeq (\Z/p^m\Z)^{n-1}$, and the sheaves $\omega_H$ and $\omega_{H^D}$ are free over $S$. Then Prop. 4.3.1 of \cite{AIP} gives us the following sheaves.
\begin{itemize}
\item
There is a free sub-sheaf of $R$-modules $\Fm^+$ of $\omega_H$ of rank 1 containing $p^{\frac{v}{p-1}}\omega_H$ which is equipped, for all $w\in ]0,m-v\frac{p^m}{p-1}]$, with a map
\[HT_w^+: C_m^D(R[1/p])\ra \Fm^+\otimes_R R_w\]
deduced from $HT_{C_m^D}$ which induces an isomorphism
\[HT_w^+\otimes 1: C_m^D(R[1/p])\otimes_\Z R_w\ra \Fm^+\otimes_R R_w.\]
\item There is a free sub-sheaf of $R$-modules $\Fm^-$ of $\omega_{H^D}$ of rank $n-1$ containing $p^{\frac{v}{p-1}}\omega_{H^D}$ which is equipped, for all $w\in ]0,m-v\frac{p^m}{p-1}]$, with a map
\[HT_w^-: (C_m^\bot)^D(R[1/p])\ra \Fm^-\otimes_R R_w\]
deduced from $HT_{(C_m^\bot)^D}$ which induces an isomorphism
\[HT_w^-\otimes 1: (C_m^\bot)^D(R[1/p])\otimes_\Z R_w\ra \Fm^-\otimes_R R_w.\]
\end{itemize}
Then we have the sum maps
\[HT_w=HT_w^+\oplus HT_w^-:  C_m^D(R[1/p])\oplus(C_m^\bot)^D(R[1/p])\ra \Fm^+\otimes_R R_w\oplus\Fm^-\otimes_R R_w,\]
\[HT_w\otimes1: C_m^D(R[1/p])\otimes_\Z R_w\oplus(C_m^\bot)^D(R[1/p])\otimes_\Z R_w\ra \Fm^+\otimes_R R_w\oplus\Fm^-\otimes_R R_w.\]

We keep the above notations and assumptions. Let $\mathcal{GR}\ra S$ be the flag variety parameterizing all (total) flags \[\textrm{Fil}_0\Fm^-=0\subset \textrm{Fil}_1\Fm^-\subset\dots\subset \textrm{Fil}_{n-1}\Fm^-=\Fm^-\] of the free module $\Fm^-$. We can view it parameterizes flags of \[\Fm:=\Fm^-\oplus\Fm^+\] of the form $\textrm{Fil}_i(\Fm)=\textrm{Fil}_i\Fm^-$ for $i=0,1,\dots,n-1$ and $\textrm{Fil}_n\Fm=\Fm$. Note that when $n=2$, we have $\mathcal{GR}=S$. Let $\mathcal{GR}^+$ be the $T_\tau=(\G_m)^n$-torsor over $\mathcal{GR}$ which parameterizes flags $\textrm{Fil}_\bullet\Fm$ together with basis $v_i$ of the graded pieces $Gr_i\Fm^-$ for $1\leq i\leq n-1$ and basis $v_n$ of $\Fm^+$.

We fix isomorphisms \[\psi^+: \Z/p^m\Z\simeq C_m^D(R[\frac{1}{p}]), \quad \psi^-: (\Z/p^m\Z)^{n-1}\simeq (C_m^\bot)^D(R[\frac{1}{p}]).\] Let $x_1,\dots,x_{n-1},x_n$ be the $\Z/p^m\Z$-basis of $(C_m^\bot)^D(R[\frac{1}{p}])\oplus C_m^D(R[\frac{1}{p}])$ corresponding to the canonical basis of $(\Z/p^m\Z)^{n-1}\oplus\Z/p^m\Z$. By $\psi=(\psi^+, \psi^-)$ we obtain a flag \[\textrm{Fil}_\bullet^{\psi}=\{0\subset \lan x_1\ran\subset\lan x_1,x_2\ran\subset\dots\subset\lan x_1,\dots,x_n\ran=(C_m^\bot)^D(R[\frac{1}{p}])\oplus C_m^D(R[\frac{1}{p}])\}.\] Let $\ov{x_i}$ be the basis of the graded pieces. Let $R'$ be an object in $R$-\textbf{Adm}. We say that an element $\textrm{Fil}_\bullet\Fm\otimes R'\in \mathcal{GR}(R')$ is $w$-compatible if \[\textrm{Fil}_\bullet\Fm\otimes R'_w=HT_w(\textrm{Fil}_\bullet^{\psi})\otimes R_w'.\] We say that an element $(\tr{Fil}_\bullet\Fm\otimes R',\{v_i\})\in\mathcal{GR}^+(R')$ is $w$-compatible if \[\tr{Fil}_\bullet\Fm\otimes R'_w=HT_w(\tr{Fil}_\bullet^{\psi})\otimes R_w'\] and \[v_i \,\mathrm{mod} p^w\Fm\otimes R'+ \tr{Fil}_{i-1}\Fm\otimes R'=HT_w(\ov{x_i}).\]

We define functors
\[\begin{split}\mathfrak{IW}_w: R-\mathbf{Adm}& \ra SET\\
R'&\mapsto \{w-\tr{compatible}\;\tr{Fil}_\bullet\Fm\otimes R'\in \mathcal{GR}(R')\},\\
\mathfrak{IW}_w^+: R-\mathbf{Adm}& \ra SET\\
R'&\mapsto \{w-\tr{compatible}\,(\tr{Fil}_\bullet\Fm\otimes R',\{v_i\})\in \mathcal{GR}^+(R')\}.\end{split}\]
These two functors are representable by affine formal schemes, for more detailed description see \cite{AIP} 4.5. We only remark that $\mathfrak{IW}_w^+$ is a torsor over $\mathfrak{IW}_w$ under $\mathfrak{T}_w$. Where $\mathfrak{T}_w$ is the formal torus defined by \[\mathfrak{T}_w(R')=Ker(T_\tau(R')\ra T_\tau(R'/\varpi^wR'))\] for any object $R'$ in \textbf{Adm}. All these constructions are functorial in $R$. They do not depend on $m$ but only on $w$.

\subsection{The overconvergent sheaves $\omega_w^{\dag\kappa}$}
Recall we have the Shimura variety $Y$. On the associated rigid space $Y^{rig}$, we have a continuous function $Hdg: Y^{rig}\ra [0,1]$. For $v\in[0,1]$, we have the open subset $Y(v)=Hdg^{-1}([0,v])$. There is a formal model $\mathfrak{Y}(v)$ of $Y(v)$ by some suitable blow-up and normalization, see \cite{AIP} 5.2.

Let $m\geq 1$ be an integer and $v\in v(O_K)$ such that $v<\frac{1}{2p^{m-1}}$ (resp. $v<\frac{1}{3p^{m-1}}$ if $p=3$). We have canonical subgroups $C_m, C_m^\bot$ of level $m$ over $Y(v)$. Let
\[X_1(p^m)(v)=Isom_{Y(v)}(\Z/p^m\Z,C_m^D)\times Isom_{Y(v)}((\Z/p^m\Z)^{n-1},(C_m^\bot)^D).\] It is a finite \'etale cover of $Y(v)$. Let $\mathfrak{X}_1(p^m)(v)$ be the normalization of $\mathfrak{Y}(v)$ in $X_1(p^m)(v)$. Let $B_\tau\subset GL_{n-1}\times GL_1$ be the Borel subgroup which contains $T_\tau$ with unipotent radical $U_\tau$ (when $n=2$, $B_\tau=T_\tau, U_\tau$ is trivial). Set $\mathfrak{X}(p^m)(v)= \mathfrak{X}_1(p^m)(v)/B_\tau(\Z/p^m\Z)$. We have the following modular interpretations for the formal schemes $\mathfrak{X}_1(p^m)(v)$ and $\mathfrak{X}(p^m)(v)$.
\begin{proposition}
For any object $R$ in \textbf{NAdm},
\begin{enumerate}
\item $\mathfrak{X}_1(p^m)(v)(R)$ is the set of isomorphic classes of $(A,\iota,\lambda,\ov{\eta},\psi^+,\psi^-)$, where $(A,\iota,\lambda,\ov{\eta})\in \mathcal{Y}(R)$, and for any rigid point $x$ in $R$, $Hdg(H_x)\leq v$; $\psi^+: \Z/p^m\Z\simeq C_m^D, \psi^-: (\Z/p^m\Z)^{n-1}\simeq (C_m^\bot)^D$ are trivialization of canonical subgroups of level $m$ over $R[\frac{1}{p}]$.
\item $\mathfrak{X}(p^m)(v)(R)$ is the set of isomorphic classes $(A,\iota,\lambda,\ov{\eta},\tr{Fil}_\bullet)$, where $(A,\iota,\lambda,\ov{\eta})\in \mathcal{Y}(R)$, and for any rigid point $x$ in $R$, $Hdg(H_x)\leq v$; $\tr{Fil}_\bullet$ is a full flag $\tr{Fil}_\bullet$ of $H[p^m]$ over $R[\frac{1}{p}]$ such that $\tr{Fil}_{1}=C_m$.
\end{enumerate}
\end{proposition}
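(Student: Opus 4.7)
The plan is to unwind each layer of the construction --- the formal model $\mathfrak{Y}(v)$, the finite \'etale cover $X_1(p^m)(v)$, its normalization, and the quotient by $B_\tau(\Z/p^m\Z)$ --- and to exploit the hypothesis $R\in\mathbf{NAdm}$ in order to transfer generic-fiber data to integral data without ambiguity.

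First I would record the baseline modular description of $\mathfrak{Y}(v)$. Since $\mathfrak{Y}(v)$ is obtained from the formal completion $\mathcal{Y}$ of $S_{K_0}$ by a blow-up followed by normalization (as recalled from \cite{AIP} 5.2), and since $R$ is normal, an $R$-point of $\mathfrak{Y}(v)$ amounts to an $R$-point of $\mathcal{Y}$ whose rigid points all satisfy $Hdg(H_x)\leq v$. Under the PEL moduli description of $S_{K_0}$ from \S 2.1, this is exactly a quadruple $(A,\iota,\lambda,\bar\eta)\in\mathcal{Y}(R)$ with the stated Hodge-height bound. For part (1), the hypothesis on $v$ lets Theorem 3.1 together with Proposition 4.1.3 produce the finite flat canonical subgroups $C_m\subset H[p^m]$ and $C_m^\bot\subset H^D[p^m]$ over $\mathfrak{Y}(v)$. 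By definition $X_1(p^m)(v)$ is the finite \'etale cover of $Y(v)$ parameterizing pairs of trivializations $(\psi^+,\psi^-)$ as in the statement, and $\mathfrak{X}_1(p^m)(v)$ is the normalization of $\mathfrak{Y}(v)$ in $X_1(p^m)(v)$. Thus, for $R$ normal admissible, an $R$-point of $\mathfrak{X}_1(p^m)(v)$ is an $R$-point of $\mathfrak{Y}(v)$ together with a lift of its generic fiber to $X_1(p^m)(v)$, the integral structure being pinned down uniquely by normality. This yields precisely the sextuple $(A,\iota,\lambda,\bar\eta,\psi^+,\psi^-)$ claimed.

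For part (2), one passes from $\mathfrak{X}_1(p^m)(v)$ to $\mathfrak{X}(p^m)(v)$ by quotienting out the action of $B_\tau(\Z/p^m\Z)\subset (GL_{n-1}\times GL_1)(\Z/p^m\Z)$. Since $\psi^-$ is a trivialization of $(C_m^\bot)^D$, quotienting by the upper-triangular Borel in $GL_{n-1}(\Z/p^m\Z)$ corresponds to retaining only the induced full flag of $(C_m^\bot)^D$, while quotienting $\psi^+$ by the scaling action of $(\Z/p^m\Z)^\times$ retains only the subgroup $C_m^D$ itself. Dualizing under the perfect pairing $H[p^m]\times H^D[p^m]\to \mu_{p^m}$ (which gives $(C_m^\bot)^D\simeq H[p^m]/C_m$), one sees that the $B_\tau(\Z/p^m\Z)$-orbit of $(\psi^+,\psi^-)$ is naturally equivalent to a full flag $\tr{Fil}_\bullet$ of $H[p^m]$ over $R[1/p]$ with $\tr{Fil}_1=C_m$.

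The main obstacle is justifying the passage from the abstract formal quotient $\mathfrak{X}_1(p^m)(v)/B_\tau(\Z/p^m\Z)$ to the moduli interpretation on all of $\mathbf{NAdm}$: one must verify that the $B_\tau(\Z/p^m\Z)$-action is free on the generic fiber and that the quotient inherits a normal admissible structure, so that functoriality on normal test rings is controlled by what happens generically. Once this descent step is in hand, the identification with full flags of $H[p^m]$ whose first step is $C_m$ is the Cartier-duality bookkeeping sketched above, and the remaining verifications are essentially a reformulation of Proposition 4.3.1 and the canonical subgroup formalism of \S 3.1.
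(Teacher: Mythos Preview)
Your proposal is correct and follows essentially the same route as the paper. The paper's proof is much terser: it declares (1) ``clear from the construction'' and for (2) simply says that the $B_\tau(\Z/p^m\Z)$-orbit of $(\psi^+,\psi^-)$ amounts to a pair of full flags $(\tr{Fil}_\bullet^+,\tr{Fil}_\bullet^-)$ of $C_m$ and $C_m^\bot$, which ``one can easily translate'' into a full flag of $H[p^m]$ with first step $C_m$. Your version is a careful unpacking of exactly this, with two cosmetic differences: you work on the dual side (flags of $(C_m^\bot)^D\simeq H[p^m]/C_m$) rather than flags of $C_m^\bot$ and then take annihilators, and you explicitly flag the descent step for the quotient on $\mathbf{NAdm}$, which the paper passes over in silence.
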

\begin{proof}
(1) is clear from the construction. For (2), by definition $\mathfrak{X}(p^m)(v)(R)$ is the set of isomorphic classes $(A,\iota,\lambda,\ov{\eta},\tr{Fil}_\bullet^+,\tr{Fil}_\bullet^-)$, where $(A,\iota,\lambda,\ov{\eta})\in \mathcal{Y}(R)$, and for any rigid point $x$ in $R$, $Hdg(H_x)\leq v$; $\tr{Fil}^+_\bullet$ (resp. $\tr{Fil}^-_\bullet$) is a full flag of $C_m$ (resp. $C_m^\bot$). One can easily translate $\tr{Fil}^+_\bullet$ and $\tr{Fil}^-_\bullet$ as a full flag of $H[p^m]$ such that $\tr{Fil}_{1}=C_m$.
\end{proof}
We will identify the formal scheme $\mathfrak{X}(p)(v)$ as a sub formal scheme of $\mathfrak{X}$, and simply write it as $\mathfrak{X}(v)$.

Let $w\in v(O_L)\cap ]m-1+\frac{v}{p-1},m-v\frac{p^m}{p-1}]$. Let $H/\mathfrak{X}_1(p^m)(v)$ be the universal $p$-divisible group. Applying the construction in the last subsection, we have locally free sub-sheaves $\Fm^+\subset \omega_{H,\mathfrak{X}_1(p^m)(v)}, \Fm^-\subset\omega_{H^D,\mathfrak{X}_1(p^m)(v)}$. They are equipped with isomorphisms:
\[(HT_w\circ\psi^+)\otimes1: \Z/p^m\Z\otimes \mathcal{O}_{\mathfrak{X}_1(p^m)(v)}/p^w\mathcal{O}_{\mathfrak{X}_1(p^m)(v)}\simeq \Fm^+\otimes O_{L,w},\]
\[(HT_w\circ\psi^-)\otimes1: (\Z/p^m\Z)^{n-1}\otimes \mathcal{O}_{\mathfrak{X}_1(p^m)(v)}/p^w\mathcal{O}_{\mathfrak{X}_1(p^m)(v)}\simeq \Fm^-\otimes O_{L,w}.\]

We have a chain of formal schemes:
\[\mathfrak{IW}_w^+\st{\pi_1}{\ra}\mathfrak{IW}_w\st{\pi_2}{\ra}\mathfrak{X}_1(p^m)(v)
\st{\pi_3}{\ra}\mathfrak{X}(p^m)(v)\st{\pi_4}{\ra}\mathfrak{X}(v).\]
Recall that $\mathfrak{IW}_w^+$ is a torsor over $\mathfrak{IW}_w$ under the formal torus $\mathfrak{T}_w$.
Let $\mathfrak{B}_w$ be the formal group defined by
\[\mathfrak{B}_w(R)=Ker(B_\tau(R)\ra B_\tau(R/p^wR))\]for all $R\in$ \textbf{Adm}. Then there is a surjective map $\mathfrak{B}_w\ra\mathfrak{T}_w$ with kernel $\mathfrak{U}_w$. Then we have an action of $B_\tau(\Z_p)\mathfrak{B}_w$ on $\mathfrak{IW}_w^+$ over $\mathfrak{X}(p^m)(v)$ (with $\mathfrak{U}_w$ acting trivially).

Recall our weight space $\W$ with $\W(\Cm_p)=Hom(T_\tau(\Z_p),\Cm_p^\times)$. As in Definition 2.2.1 of \cite{AIP}, for $w\in\Q_{>0}$, a character $\kappa\in\W(\Cm_p)$ is called $w$-analytic if it extends to an analytic map $\kappa: T_\tau(\Z_p)(1+p^wO_{\Cm_p})^n\ra \Cm_p^\times$. Moreover, $\W$ has an increasing cover by affinoids $\W=\bigcup_{w>0}\W(w)$, such that the restriction of the universal character $\kappa^{un}$ of $\W$ to $\W(w)$ is $w$-analytic. Let $\kappa=\kappa_\tau\in\W(K)$ be a $w$-analytic character, where $K|L$ is a finite extension. Then we have the total character $\widetilde{\kappa}=(\kappa,(\kappa_\sigma)_{\sigma\neq\tau})$, with the characters $(\kappa_\sigma)_{\sigma\neq\tau}$ fixed as in the subsection 2.2. The involution \[\kappa=(k_{\tau,1},\dots,k_{\tau,n-1},k_\tau)\mapsto\kappa'=(-k_{\tau,n-1},\dots,-k_{\tau,1},k_\tau)\] of $X^\ast(T_\tau)$ extends to an involution of $\W$ mapping $w$-analytic characters to $w$-analytic characters. The character $\kappa': T_\tau(\Z_p)\ra K^\times$ extends to a character $\kappa': B_\tau(\Z_p)\mathfrak{B}_w\ra K^\times$ with $U_\tau(\Z_p)\mathfrak{U}_w$ acting trivially. Recall that we have the above chain of morphisms of formal schemes.
Set $\pi=\pi_1\circ\pi_2\circ\pi_3\circ\pi_4$. Let $\wt{\omega^\tau}$ be the formal completion of the integral bundle $\otimes_{\sigma\neq\tau}\omega^{\kappa_\sigma}_\sigma$ over $S_K$. This is a sheaf over $\mathfrak{X}$. Since $(\kappa_\sigma)_{\sigma\neq\tau}$ is fixed, the weights of our automorphic forms depend only on $\kappa=\kappa_\tau$.
\begin{definition}
\begin{enumerate}
\item The formal Banach sheaf of $w$-analytic, $v$-overconvergent automorphic forms of weight $\kappa$ is
\[\mathfrak{m}_w^{\dag\kappa}=\pi_\ast \mathcal{O}_{\mathfrak{IW}_w^+}[\kappa']\otimes\wt{\omega^\tau}|_{\mathfrak{X}(v)}.\]
\item The space of integral $w$-analytic, $v$-overconvergent automorphic forms of weight $\kappa$ over $X$ is
\[M_w^{\dag\kappa}(\mathfrak{X}(v))=H^0(\mathfrak{X}(v), \mathfrak{m}_w^{\dag\kappa}).\]
\end{enumerate}
\end{definition}

Let $\kappa,m,v,w$ satisfy all the compatible conditions for the existence of $\mathfrak{m}^{\dag\kappa}_w$. If $v'<v$ then $\kappa,m,v',w$ satisfy also the conditions and the sheaf $\mathfrak{m}^{\dag\kappa}_w$ on $\mathfrak{X}(v')$ is the restriction of the sheaf on $\mathfrak{X}(v)$. For any $w'>w$, one can find $m'$ such that $\kappa,m',v,w'$ satisfy the conditions, and one has a map $\mathfrak{m}^{\dag\kappa}_w\ra\mathfrak{m}^{\dag\kappa}_{w'}$ and thus a map $M_w^{\dag\kappa}(\mathfrak{X}(v))
\ra M_{w'}^{\dag\kappa}(\mathfrak{X}(v))$.
\begin{definition}
Let $\kappa\in \W$. The space of integral locally analytic overconvergent automorphic forms of weight $\kappa$ over $X$ is
\[M^{\dag\kappa}(\mathfrak{X})=\varinjlim_{v\ra0,w\ra\infty}M_w^{\dag\kappa}(\mathfrak{X}(v)).\]
\end{definition}

Let $\mathcal{IW}_w^+$ and $\mathcal{IW}_w$ be the rigid spaces associated to $\mathfrak{IW}^+_w$ and $\mathfrak{IW}_w$ respectively. They are admissible opens of the rigid spaces associated to $\mathcal{GR}^+$ and $\mathcal{GR}$ respectively. Let $T_w$ be the rigid space associated to the formal torus $\mathfrak{T}_w$. Then $\mathcal{IW}_w^+$ is a $T_w$-torsor over $\mathcal{IW}_w$. The rigid spaces associated to $\mathfrak{X}(p^m)(v)$ (resp. $\mathfrak{X}(v)$) will be denoted by $X(p^m)(v)$ (resp. $X(v)$). Note that $X(v)$ is a strict neighborhood of the tube $]X_1^0[$ over the multiplicative-ordinary locus, see subsection 2.3. Moreover, $X(0)=]X_1^0[$. We have a chain of rigid spaces:
\[\mathcal{IW}_w^+\ra \IW_w\ra X_1(p^m)(v)\ra X(p^m)(v)\ra X(v).\]
As in \cite{AIP}, let $X^+(p^m)(v)=X_1(p^m)(v)/U_\tau(\Z/p^m\Z)$, which is the rigid space associated to $\mathfrak{X}^+(p^m)(v)= \mathfrak{X}_1(p^m)(v)/U_\tau(\Z/p^m\Z)$, then $\IW_w^+$ (resp. $\IW_w$) descends to a rigid space $\IW_w^{0+}$ (resp. $\IW_w^0$) over $X^+(p^m)(v)$ (resp. $X(p^m)(v)$). Moreover, $\IW_w^{0+}$ is a $T_w$ torsor over $\IW_w^0$. Recall the $M'=GL_{n-1}\times GL_1\times\prod_{\sigma\neq\tau}GL_n$-torsor $\mathcal{T}$ over $X$. We have the decomposition $\mathcal{T}=\mathcal{T}_\tau\times\mathcal{T}^\tau$, where $\mathcal{T}_\tau$ (resp. $\mathcal{T}^\tau$) is the $GL_{n-1}\times GL_1$-torsor (resp. $\prod_{\sigma\neq\tau}GL_n$-torsor) over $X$. Let $\T^{rig}, \T_\tau^{rig}, \T^{\tau,rig}$ be the rigid analytic spaces associated to $\T, \T_\tau$ and $\T^\tau$ respectively. For $w>m-1+\frac{v}{{p-1}}$, we have open immersions (\cite{AIP} Prop. 5.3.1) \[\IW_w^{0+}\hookrightarrow (\T^{rig}_\tau/U_{\tau})|_{X(v)}, \IW_w^{0}\hookrightarrow (\T^{rig}_\tau/B_{\tau})|_{X(v)}.\]

Let $\omega_w^{\dag\kappa}$ be the generic fiber of the formal Banach sheaf $\mathfrak{m}_w^{\dag\kappa}$. It can be defined by using the morphism $\IW_w^{0+}\ra X(v)$ in the same way as in the definition of $\mathfrak{m}_w^{\dag\kappa}$.
\begin{definition}
Let $\kappa\in\W$. The space of $w$-analytic, $v$-overconvergent automorphic forms of weight $\kappa$ is
\[M_w^{\dag\kappa}(X(v))=H^0(X(v),\omega_w^{\dag\kappa}).\]
The space of locally analytic overconvergent automorphic forms of weight $\kappa$ is
\[M^{\dag\kappa}(X)=\varinjlim_{v\ra0,w\ra\infty}M_w^{\dag\kappa}(X(v)).\]
\end{definition}

Concretely, we can describe a $w$-analytic, $v$-overconvergent automorphic form $f$ of weight $\kappa$ as an element in $H^0(\IW_w^{0+}\times\T^{\tau,rig},\mathcal{O}_{\IW_w^{0+}\times\T^{\tau,rig}})$ in the following way (here and in the following we write simply $\IW_w^{0+}\times\T^{\tau,rig}$ for the space $\IW_w^{0+}\times\T^{\tau,rig}|_{X(v)}$ over $X(v)$). For any finite extension $K$ of $L$, a $K$-valued point of $\IW_w^{0+}\times\T^{\tau,rig}$ has the form \[(A,\lambda,\iota,\ov{\eta}, \tr{Fil}_\bullet H[p], \tr{Fil}_\bullet\Fm, (v_\tau,(v_\sigma)_{\sigma\neq\tau})),\]where $(A,\lambda,\iota,\ov{\eta})\in Y(v)(K)$, $\tr{Fil}_\bullet H[p]$ is a full flag of the $p$-torsion subgroup of the associated one dimensional $p$-divisible group $H$ over $O_K$ such that $\tr{Fil}_1H[p]=C_1$, $(\tr{Fil}_\bullet\Fm,v_\tau)$ is a full flag in $(\T_\tau^{rig}/B_\tau)(K)$ with a basis $v_\tau=(v_{1\tau},\dots,v_{(n-1)\tau},v_{n\tau})$ for the graded pieces, such that there is a trivialization $\psi=(\psi^+,\psi^-): C_m^D(\ov{K})\oplus (C_m^\bot)^D(\ov{K})\simeq \Z/p^m\Z\oplus(\Z/p^m\Z)^{n-1}$ (for some integer $m\geq 1$) which is compatible with $\tr{Fil}_\bullet H[p]$, and $(\tr{Fil}_\bullet\Fm,v_\tau)$ is $w$-compatible with $\psi$; finally $(v_\sigma)_{\sigma\neq\tau}$ is an element in $\T^{\tau,rig}(K)$ over $X^{rig}(K)$. Then \[f(A,\lambda,\iota,\ov{\eta}, \tr{Fil}_\bullet H[p], \tr{Fil}_\bullet\Fm, (v_\tau,(v_\sigma)_{\sigma\neq\tau}))
\in K\] such that for all $b\in B_{M'}$ we have
\[f(A,\lambda,\iota,\ov{\eta}, \tr{Fil}_\bullet H[p], \tr{Fil}_\bullet\Fm, (v_\tau,(v_\sigma)_{\sigma\neq\tau})\circ b)=\wt{\kappa'}(b)f(A,\lambda,\iota,\ov{\eta}, \tr{Fil}_\bullet H[p], \tr{Fil}_\bullet\Fm, (v_\tau,(v_\sigma)_{\sigma\neq\tau})).\]

The space $M_w^{\dag\kappa}(X(v))$ is a Banach space, with the unit ball $M_w^{\dag\kappa}(\mathfrak{X}(v))$. Locally for the \'etale topology, the sheaf $\omega^{\dag\kappa}_w$ has fibres isomorphic to the space $V_{\kappa'}^{w-an}\otimes\bigotimes_{\sigma\neq\tau}V_{\kappa_\sigma}$ (cf. \cite{AIP} Proposition 5.3.4), where $V_{\kappa'}^{w-an}$ is the locally $w$-analytic representation of the Iwahori subgroup $I$ of $GL_{n-1}(\Z_p)\times GL_1(\Z_p)$, which is defined as
\[V_{\kappa'}^{w-an}=\{f: I\ra L|\,f(ib)=\kappa'(b)f(i),\,\forall\, (i,b)\in\,I\times B_\tau(\Z_p),\,f|_{N^0}\in\Fm^{w-an}(N^0,L)\}.\] Here are some explanations. Recall that $I$ has the Iwahori decomposition $I=B_\tau(\Z_p)\times N^0$, where $N^0$ is defined in the following way: let $B_\tau^0\subset GL_{n-1}\times GL_1$ be the opposite Borel subgroup of $B_\tau$ with unipotent radical $U^0_\tau$, then $N^0$ is the subgroup of $U^0_\tau(\Z_p)$ of matrices which reduce to the identity modulo $p$. $\Fm^{w-an}(N^0,L)$ is the set of $w$-analytic functions, i.e. the functions from $N^0$ to $L$ which are the restrictions to $N^0$ of the unique analytic functions on $N^0_w:=\bigcup_{x\in N^0}B(x,p^{-w})$, where $B(x,p^{-w})$ is the closed ball with center $x$ radius $p^{-w}$ in the rigid analytic affine space $\A^{\frac{(n-1)(n-2)}{2},rig}$ over $\Q_p$. Here we identify $N^0$ with $(p\Z_p)^{\frac{(n-1)(n-2)}{2}}\subset \A^{\frac{(n-1)(n-2)}{2},rig}$. In the case $n=2$, we have $N^0=\{1\}, I=B_\tau(\Z_p)=T_\tau(\Z_p)=(\Z_p^\times)^2$, $V_{\kappa'}^{w-an}$ is just the space of $w$-analytic characters.

Recall that we have the total character $\wt{\kappa}=(\kappa,(\kappa_\sigma)_{\sigma\neq\tau})$. When $\kappa\in\Z^{n-1}_+\times\Z$ we have the automorphic vector bundle $\omega^{\wt{\kappa}}$ over $X$, which we will denote simply as $\omega^\kappa$ since $(\kappa_\sigma)_{\sigma\neq\tau}$ is fixed. We denote also the corresponding vector bundle over the rigid analytic space $X^{rig}$ by $\omega^\kappa$. By construction we have the following proposition and corollary.
\begin{proposition}
If $\kappa\in \Z^{n-1}_+\times\Z$, then there is a canonical restriction map
\[\omega^\kappa|_{X(v)}\hookrightarrow \omega_w^{\dag\kappa}\]induced by the open immersion $\IW_w^{0+}\hookrightarrow (\T^{rig}_\tau/U_{\tau})|_{X(v)}$.
Locally for the \'etale topology, this map is isomorphic to the inclusion
\[V_{\kappa'}\otimes V^\tau\hookrightarrow V_{\kappa'}^{w-an}\otimes V^\tau\]of the algebraic induction into the analytic induction, where $V^\tau=\bigotimes_{\sigma\neq\tau}V_{\kappa_\sigma}$.
\end{proposition}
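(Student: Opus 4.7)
The plan is to unpack both definitions and verify that the open immersion $\IW_w^{0+}\hookrightarrow (\T^{rig}_\tau/U_{\tau})|_{X(v)}$ induces a restriction map that respects the isotypic decompositions used to define each sheaf. Recall that for integral dominant $\kappa$, the automorphic bundle $\omega^\kappa$ is the $\kappa'$-eigenspace of $B_{M'}$ acting on $\pi_\ast\mathcal{O}_{\T}$; after splitting $\T=\T_\tau\times\T^\tau$, fixing the characters $(\kappa_\sigma)_{\sigma\neq\tau}$ on $\T^\tau$, and taking $U_\tau$-invariants on the $\T_\tau$ factor, the restriction $\omega^\kappa|_{X(v)}$ is the $\kappa'_\tau$-isotypic piece (under $T_\tau$) of the pushforward of $\mathcal{O}_{(\T^{rig}_\tau/U_\tau)|_{X(v)}}\otimes\wt{\omega^\tau}$. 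The overconvergent sheaf $\omega_w^{\dag\kappa}$ is constructed in exactly the same way, but with $(\T_\tau^{rig}/U_\tau)|_{X(v)}$ replaced by $\IW_w^{0+}$ and with the group $T_\tau$ replaced by $B_\tau(\Z_p)\mathfrak{B}_w$ (with $\mathfrak{U}_w$ acting trivially). For integral $\kappa$, the character $\kappa'$ of $B_\tau(\Z_p)\mathfrak{B}_w$ is the restriction of the algebraic character $\kappa'$ on $T_\tau$ extending trivially across $\mathfrak{U}_w$, which is exactly the compatibility needed to match the two isotypic components.

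First I would produce the map. The open immersion is equivariant for the compatible torus actions: $T_\tau$ acts on the algebraic flag quotient, and this action restricts to the $T_\tau(\Z_p)\mathfrak{T}_w$-action on the torsor $\IW_w^{0+}\to\IW_w^0$ used in the definition of $\omega_w^{\dag\kappa}$. Consequently, restriction of sections along the open immersion carries the $\kappa'$-eigenspace for the algebraic $T_\tau$-action into the $\kappa'$-eigenspace for the $B_\tau(\Z_p)\mathfrak{B}_w$-action, yielding the desired morphism $\omega^\kappa|_{X(v)}\to\omega_w^{\dag\kappa}$. Injectivity follows because, fibrewise over $X(v)$, the space $\IW_w^{0+}$ is Zariski-dense inside the (relatively compact) flag quotient, so no nonzero algebraic section vanishes on it.

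Next I would establish the étale-local description. Working étale-locally over $X(v)$, the $M'$-torsor $\T$ trivializes, so $(\T^{rig}_\tau/U_\tau)|_{X(v)}$ becomes the constant family $(GL_{n-1}\times GL_1)/U_\tau$. In this trivialization, sections of $\omega^\kappa|_{X(v)}$ that transform by $\kappa'$ under $T_\tau$ are precisely algebraic functions on $(GL_{n-1}\times GL_1)/U_\tau$ transforming by $\kappa'$, i.e. the algebraic induction $V_{\kappa'}$; the extra tensor factor $V^\tau$ arises from the fixed contribution of $\T^\tau$. On the other hand, using the trivializations $\psi^{\pm}$ together with the Hodge-Tate isomorphisms $HT_w^{\pm}$ of Proposition~3.1.2(3), the open $\IW_w^{0+}$ identifies with the $w$-thickening of the Iwahori orbit $I/U_\tau$ of the distinguished flag $\tr{Fil}_\bullet^{\psi}$, and sections transforming by $\kappa'$ under $B_\tau(\Z_p)\mathfrak{B}_w$ are precisely the $w$-analytic induction $V_{\kappa'}^{w-an}$. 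The map $\omega^\kappa|_{X(v)}\to\omega_w^{\dag\kappa}$ is then literally the restriction of algebraic functions on $(GL_{n-1}\times GL_1)/U_\tau$ to $w$-analytic functions on the Iwahori neighborhood, which is the standard inclusion $V_{\kappa'}\hookrightarrow V_{\kappa'}^{w-an}$.

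The main obstacle is the identification in the previous paragraph of $\IW_w^{0+}$ with the Iwahori thickening in the trivialized flag variety. This requires verifying carefully that the $w$-compatibility condition of Subsection~3.1, which is phrased in terms of the Hodge-Tate images of the canonical subgroups, reduces étale-locally to the geometric statement that $\IW_w^{0+}$ parameterizes flags and bases which differ from $HT_w(\tr{Fil}_\bullet^\psi)$ and $HT_w(\ov{x_i})$ by elements of $N^0\mathfrak{U}_w$ and $\mathfrak{T}_w$ respectively. Once this is unwound, the action of $B_\tau(\Z_p)\mathfrak{B}_w$ matches the Iwahori action on the algebraic flag variety and the proposition reduces to the tautology that restricting an algebraic function on a group to a congruence neighborhood of the identity yields a $w$-analytic function.
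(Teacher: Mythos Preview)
Your proposal is correct and is precisely the unpacking the paper has in mind: the paper offers no proof at all, simply prefacing the proposition with ``By construction we have the following proposition and corollary.'' Your argument---equivariance of the open immersion for the compatible $T_\tau$- and $B_\tau(\Z_p)\mathfrak{B}_w$-actions, restriction of $\kappa'$-eigenspaces, and the \'etale-local identification of $\IW_w^{0+}$ with the $w$-thickened Iwahori orbit via $HT_w\circ\psi$---is exactly the content hidden behind that phrase, and your singling out of the last identification as the only nontrivial step is accurate.
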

\begin{corollary}
For any $\kappa\in \Z^{n-1}_+\times\Z$, we have an inclusion
\[H^0(X,\omega^\kappa)\hookrightarrow M_w^{\dag\kappa}(X(v))\] from the space of classical forms of weight $\kappa$ into the space of $w$-analytic, $v$-overconvergent automorphic forms of weight $\kappa$.
\end{corollary}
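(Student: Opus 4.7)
The plan is to chain the two inputs supplied by the preceding results: the GAGA identification $H^0(X,\omega^\kappa)=H^0(X^{rig},\omega^\kappa)$ recorded at the end of subsection 2.2, and the sheaf injection $\omega^\kappa|_{X(v)}\hookrightarrow\omega_w^{\dag\kappa}$ provided by the Proposition just stated. Concretely, I would produce the composite
\[
H^0(X,\omega^\kappa)\;=\;H^0(X^{rig},\omega^\kappa)\;\st{\mathrm{res}}{\lra}\;H^0(X(v),\omega^\kappa|_{X(v)})\;\lra\; H^0(X(v),\omega_w^{\dag\kappa})\;=\;M_w^{\dag\kappa}(X(v)),
\]
and show that each of the two arrows in the middle is injective.

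First I would handle the second arrow. Since the Proposition asserts that $\omega^\kappa|_{X(v)}\hookrightarrow\omega_w^{\dag\kappa}$ is a monomorphism of coherent sheaves on the rigid analytic space $X(v)$, applying the left-exact functor $H^0(X(v),-)$ preserves injectivity; this step is essentially formal. The local description given in the Proposition (the inclusion $V_{\kappa'}\otimes V^\tau\hookrightarrow V_{\kappa'}^{w-an}\otimes V^\tau$ of the algebraic induction into the $w$-analytic induction) can be invoked if one wants to exhibit the map explicitly on the level of functions on $\IW_w^{0+}\times\T^{\tau,rig}$.

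For the first arrow, the restriction map, I would argue that $X(v)$ is Zariski-dense in $X^{rig}$ in the sense that it meets every connected component. This uses the geometric description from subsection 2.3: $X(v)$ is a strict neighborhood of the tube $]X_1^0[$ over the multiplicative ordinary locus, and the ordinary locus is open and dense in $\ov{X}$ (under the assumption $F_\vp=\Q_p$), hence hits every irreducible component. Since $\omega^\kappa$ is a locally free sheaf on the reduced rigid space $X^{rig}$, any global section vanishing on the non-empty admissible open $X(v)$ of each component must vanish; this gives injectivity of the restriction map.

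The only real point that needs care is the density statement used in the restriction step: one must justify that $]X_1^0[$ meets every connected (equivalently, every irreducible) component of $X^{rig}$. I would handle this by transporting along specialization to the special fiber $\ov{X}$, where the $p$-rank stratification $\ov{X}=\coprod_h\ov{X}^h$ and the coincidence of the Newton and $p$-rank stratifications in our signature case make the (multiplicative) ordinary locus open and dense on each component, as already invoked in subsection 2.3. Once this density is recorded, both injectivities are immediate and the composition yields the asserted inclusion $H^0(X,\omega^\kappa)\hookrightarrow M_w^{\dag\kappa}(X(v))$.
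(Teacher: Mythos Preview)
Your approach is correct and is the natural way to unpack the corollary; the paper itself gives no proof beyond the phrase ``by construction'', so there is nothing to compare against. The chain GAGA $+$ restriction $+$ sheaf injection is exactly the intended argument.

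One imprecision in your density step: you write that the (multiplicative) ordinary locus is ``open and dense on each component'' of $\ov{X}$. That is not true as stated. The multiplicative-ordinary locus is $X_1^0$, which is open and dense only in the single irreducible component $X_1$ of $\ov{X}$; it is not dense in the other irreducible components $X_2,\dots,X_n$, nor in any connected component of $\ov{X}$ (each of which is a union of all the $X_i$, as these intersect along the lower Newton strata). What you actually need, and what is true, is the weaker fact that $X_1^0$ is \emph{non-empty} in every connected component of $\ov{X}$: since $S_K$ is proper flat with semi-stable reduction, connected components of $X^{rig}$ and of $\ov{X}$ correspond via specialization, and each connected component of $\ov{X}$ contains its own copy of $X_1$ (and hence of the open $X_1^0\subset X_1$). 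This suffices for the identity principle on the smooth rigid space $X^{rig}$ and hence for injectivity of the restriction $H^0(X^{rig},\omega^\kappa)\to H^0(X(v),\omega^\kappa)$.
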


\section{Hecke operators}
\subsection{Hecke operators outside $p$}
Consider the set of prime to $p$ Hecke correspondences $K^p\setminus G(\A_f^p)/K^p$. For any $K^pgK^p$, there is an algebraic correspondence $p_1,p_2: C_g\ra X$ over $X$. It is defined as follows: recall $K=K^pK_p$ with $K_p$ as in subsection 2.2. We have $C_g=Sh_{K_pK^p_g}$ where $K^p_g=K^p\cap g^{-1}K^pg$, with $p_1$ the natural projection and $p_2$ as the composition of the natural projection $Sh_{K_pK^p_g}\ra Sh_{K_pg^{-1}K^pg}$ with the isomorphism $Sh_{K_pg^{-1}K^pg}\ra Sh_{K_pK^p}=X$ induced by $g$. We consider the rigid analytification of this correspondence, then we restrict it over $X(v)$, so we have the diagram \[\xymatrix{
&C_g(v)\ar[ld]_{p_1}\ar[rd]^{p_2}&\\
X(v)& &X(v).
 }\] Over $C_g$ there is a universal prime to $p$ isogeny $\pi: A\ra A'$ defined by the morphism $Sh_{K_pg^{-1}K^pg}\ra Sh_{K_pK^p}=X$, which induces a map $\pi^\ast: \omega_{A'}\ra\omega_A$, hence a map $p_2^\ast(\T^{rig}_\tau/U_\tau\times\T^{\tau,rig})\ra p_1^\ast(\T^{rig}_\tau/U_\tau\times\T^{\tau,rig})$ which is an isomorphism. For $w\in ]m-1+\frac{v}{p-1},m-v\frac{p^m}{p-1}]$, the map $\pi^\ast$ induces an isomorphism (see \cite{AIP} Lemma 6.1)
\[\pi^\ast: p_2^\ast(\IW_w^{0+}\times\T^{\tau,rig})\simeq p_1^\ast(\IW_w^{0+}\times\T^{\tau,rig}).\] We define the Hecke operator $T_g$ as the composition:
\[T_g: H^0(X(v),\omega_w^{\dag\kappa})\st{p_2^\ast}{\ra}H^0(C_g(v),p_2^\ast\omega_w^{\dag\kappa})\st{\pi^{\ast}}{\ra}
H^0(C_g(v),p_1^\ast\omega_w^{\dag\kappa})\st{Trp_1}{\ra}H^0(X(v),\omega_w^{\dag\kappa}).\]

Consider the prime to $p$ Hecke algebra $\mathbb{T}^p=\mathcal{H}(G(\A_f^p)//K^p)$, which is the restricted tensor product of all local Hecke algebras $\mathbb{T}_l=\mathcal{H}(G(\Q_l)//K_l)$ for primes $l\neq p$. We know there are only finite primes $l$ such that $K_l$ is not hyperspecial maximal. Let $\mathbb{T}^{K^p}$ be the sub algebra of $\mathbb{T}^p$ which is the restricted tensor product of all $\mathbb{T}_l$ such that $K^p$ is hyperspecial maximal at $l$. Then $\mathbb{T}^{K^p}$ is commutative. The above construction gives us an action of $\mathbb{T}^{K^p}$ on $M^{\dag\kappa}_w(X(v))$.

\subsection{Hecke operators at $p$}
We will define $n-1$ Hecke operators at $p$ as follows. Recall that in section 2 the primes of the CM field $F$ over $p$ are denoted by $\varpi=\varpi_1,\varpi_2,\dots,\varpi_r$, $\varpi^c,\varpi_2^c,\dots,\varpi_r^c$, and we have assumed $F_\vp=\Q_p$. The $p$-divisible group associated to the universal abelian scheme $A$ over $S_K$ has the decomposition
\[ A[p^\infty]=A[\varpi_1^\infty]\oplus\cdots\oplus A[\varpi_r^\infty]\oplus A[\varpi_1^{c,\infty}]\oplus\cdots\oplus A[\varpi_r^{c,\infty}],\]with $A[\varpi_i^\infty]^D\simeq A[\varpi_i^{c,\infty}]$ for all $1\leq i\leq r$. The $p$-divisible group $H=\varepsilon_1^+ A[\varpi^\infty]$ has dimension one, where $\varepsilon_1^+$ is the idempotent introduced in subsection 2.2. For a finite locally free subgroup $L\subset H[p]$, we denote by $\widetilde{L}\subset A[p]$ the subgroup such that under the induced decomposition $\widetilde{L}=\wt{L}_1\oplus\cdots\oplus\wt{L}_r\oplus\wt{L}^c_1\oplus\cdots\oplus\wt{L}^c_r$, we have
\[\wt{L}_1=\Z_p^n\otimes L, \wt{L}_i=A[\varpi_i],\, 2\leq i\leq r, \wt{L}_1^c=\Z_p^n\otimes(H[p]/L)^D, \wt{L}_i^c=0,\, 2\leq i\leq r.\]Here $\wt{L}_1$ and $\wt{L}_1^c$ are the subgroups of $A[\varpi]$ and $A[\varpi^c]$ respectively via the Morita equivalence corresponding to $L$ and $(H[p]/L)^D$. Note $\varepsilon_1^+(A/\wt{L})[\vp^\infty]=H/L$.

Now for $i=1,\dots,n-1$, let $C_i$ be the moduli scheme over $X$ parameterizing finite locally free subgroups $L\subset H[p]$ such that $L\oplus \tr{Fil}_iH[p]=H[p]$. There are two projections $p_1,p_2: C_i\ra X$. The first projection is defined by forgetting $L$. The second projection is defined by mapping $(A,\lambda,\iota,\ov{\eta},\tr{Fil}_\bullet H[p], L)$ to $((A/\wt{L}),\lambda',\iota',\ov{\eta'}, \tr{Fil}_\bullet(H/L)[p])$, where the filtration on $\varepsilon_1^+(A/\wt{L})[\varpi]=(H/L)[p]$ is defined as follows:
\begin{itemize}
\item For $j=1,\dots,i$, $\tr{Fil}_j(H/L)[p]$ is simply the image of $\tr{Fil}_jH[p]$ in $H/L$,
\item For $j=i+1,\dots,n-1$, $\tr{Fil}_j(H/L)[p]$ is the image of $\tr{Fil}_jH[p]+p^{-1}(\tr{Fil}_jH[p]\cap L)$ in $H/L$.
\end{itemize}
We consider the analytifications $p_1,p_2: C_i^{rig}\ra X^{rig}$. For an admissible open subset $V\subset X^{rig}$, we denote the image of $V$ under the correspondence $C_i$ by $U_i(V):=p_2(p_1^{-1}(V))$, which is also an admissible open subset of $X^{rig}$.

\begin{remark}
In the definition of the correspondences $C_i$ for $i=1,\dots,n-1$, we have taken them by parameterizing $L$ with $L\oplus \tr{Fil}_iH[p]=H[p]$, $\wt{L}_j=A[\varpi_j]$ and $\wt{L}_j^c=0$ for $2\leq j\leq r$. In fact, as in \cite{PS}, it is more natural to define the correspondences $C_{i,\varpi_1}$ for $i=1,\dots,n-1$ by parameterizing $L$ with $L\oplus \tr{Fil}_iH[p]=H[p]$, $\wt{L}_j=\wt{L}_j^c=0$ for $2\leq j\leq r$. At the places $\varpi_j$ for $2\leq j\leq r$, we define correspondences $C_{\varpi_j}$ by parameterizing $L$ with $\wt{L}_j=A[\varpi_j], \wt{L}_k=0$ for all $1\leq k\leq r, k\neq j$. Then $C_{\varpi_j}=X$ for $2\leq j\leq r$ and the morphism $p_2$ is by taking the quotient by $\wt{L}=A[\varpi_j]$. For $2\leq j\leq r$, by construction these quotients do not change the part $\otimes_{\sigma\neq\tau}\omega^\kappa_\sigma$. Therefore when passing to operators on the space of automorphic forms (see below), there is no difference between $U_i=U_{i,\varpi_1}\circ\prod_{j=2}^rU_{\varpi_j}$ and $U_{i,\varpi_1}$ for $i=1,\dots,n-1$.
\end{remark}

We first consider the correspondence $C_1$. If $v<\frac{p-2}{2p-2}$, then by the theory of canonical subgroups we know (\cite{F2} Prop. 17) \[U_1(X(v))\subset X(\frac{v}{p}).\]
Let $C_1(v)=C_1^{rig}\times_{p_1,X^{rig}}X(v)$. We have the diagram
\[\xymatrix{
&C_1(v)\ar[ld]_{p_1}\ar[rd]^{p_2}&\\
X(v)& &X(\frac{v}{p}).
 }\]
Let $\pi: A\ra A'$ be the universal isogeny over $C_1(v)$. Then as before it induces an isomorphism
\[\pi^\ast: p_2^\ast(\IW_w^{0+}\times\T^{\tau,rig})|_{X(\frac{v}{p})}\simeq p_1^\ast(\IW_w^{0+}\times\T^{\tau,rig}).\]
We define the Hecke operator $U_1$ as the composition:
\[H^0(X(\frac{v}{p}),\omega_w^{\dag\kappa})\st{p_2^\ast}{\ra}H^0(C_1(v),p_2^\ast\omega_w^{\dag\kappa})
\st{\pi^{\ast}}{\ra}H^0(C_1(v),p_1^\ast\omega_w^{\dag\kappa})\st{p^{1-n}Trp_1}{\longrightarrow}H^0(X(v),\omega_w^{\dag\kappa}).\]
By abuse of notation, we also denote by $U_1$ the endomorphism of $H^0(X(v),\omega_w^{\dag\kappa})$ obtained as the composition of the above operator we just defined with the restriction map \[H^0(X(v),\omega_w^{\dag\kappa})\ra H^0(X(\frac{v}{p}),\omega_w^{\dag\kappa}).\]

Next we consider the correspondences $C_i$ for $2\leq i\leq n-1$. Let $C_i(v)=C_i^{rig}\times_{p_1,X^{rig}}X(v)$. If
$v<\frac{p-2}{2p^2-p}$, we have a diagram
\[\xymatrix{
&C_i(v)\ar[ld]_{p_1}\ar[rd]^{p_2}&\\
X(v)& &X(v).
 }\]
As in \cite{AIP} 5.6, for $v<\frac{1}{2p^{m-1}}$ (resp. $v<\frac{1}{3^m}$ if $p=3$) and $\ul{w}=(w_{k,j})_{1\leq j\leq k\leq n}\in
]\frac{v}{p-1},m-v\frac{p^m}{p-1}]^{\frac{n(n+1)}{2}}$ satisfying $w_{k+1,j}\geq w_{k,j}, w_{k,j-1}\geq w_{k,j}$, we can introduce a space $\IW_{\ul{w}}^{0+}$ over $X(v)$ such that for $\ul{w}$ with all $w_{k,j}=w$, $\IW_{\ul{w}}^{0+}=\IW_{w}^{0+}$. Let $\pi: A\ra A'$ be the universal isogeny over $C_i(v)$. We have a map $\pi^\ast: \omega_{A'}\ra\omega_A$. It induces a map $\wt{\pi}^\ast: p_2^\ast\T_\tau^{rig}\ra p_1^\ast\T_\tau^{rig}$ which sends a basis $v_1',\dots,v_n'$ of $\omega_{A',\tau}$ to $p^{-1}\pi^\ast v_1',\dots,p^{-1}\pi^{\ast}v_{n-i}',\pi^{\ast}v_{n-i+1}',\dots,\pi^\ast v_n'$. This map is an isomorphism. It induces an isomorphism $\wt{\pi}^{\ast-1}: p_1^\ast\T_\tau^{rig}/U_\tau \ra p_2^\ast\T_\tau^{rig}/U_\tau$. We have the following proposition.
\begin{proposition}
Let $v<\inf\{\frac{1}{3p^{m-1}},\frac{p-2}{2p^2-p}\}$ and $\ul{w}=(w_{k,j})_{1\leq j\leq k\leq n-1}$ with $w_{k,j}\in ]\frac{v}{p-1},m-2-v\frac{p^m}{p-1}]$. Then the isogeny $\pi$ induces $\wt{\pi}^{\ast-1}p_1^\ast\IW_{\ul{w}}^{0+}\subset
p_2^\ast\IW_{\ul{w'}}^{0+}$ where $w_{k,j}'=w_{k,j}$ for $j\leq k\leq i$, $w_{k,j}'=1+w_{k,j}$ for $j\leq i$ and $k\geq i+1$, and $w_{k,j}'=w_{k,j}$ for $j\geq i+1$.
\end{proposition}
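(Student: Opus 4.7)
The plan is to read the statement locally and turn $\ul{w}$-compatibility into a matrix-entry condition comparing a chosen basis of the flag of $\Fm$ against the Hodge-Tate image of the canonical trivializations; then the explicit rescaling in the formula for $\wt{\pi}^\ast$ transforms this matrix in a block-diagonal way whose effect on each entry can be read off. First, I would fix local coordinates: take an admissible $O_L$-algebra $R$ representing a point of $C_i(v)$ together with a trivialization $\psi=(\psi^+,\psi^-)$ of the canonical subgroups of $H = \varepsilon_1^+ A[\vp^\infty]$ and of $H^D$ on the $A$-side. Because $L \subset H[p]$ is complementary to $\tr{Fil}_i H[p]$ and $\tr{Fil}_1 H[p] = C_1$, we have $L \cap C_1 = 0$, so $\pi|_H : H \to H' = H/L$ is \'etale on the canonical subgroup side. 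Hence $\pi$ restricts to an isomorphism $C_m \simeq C'_m$ onto the canonical subgroup of $H'$, and dually $\pi^D$ identifies $(C'_m)^\bot$ with $C_m^\bot$, so $\psi$ transports to a compatible trivialization $\psi'$ for $H'$, which we fix once and for all.

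Next, I would unwind the interaction between $\wt{\pi}^\ast$ and the Hodge-Tate maps. Functoriality of $HT$ for $\pi^D$ gives a commuting square linking $HT_{C_m^D}$ on the $A$-side to $HT_{(C'_m)^D}$ on the $A'$-side via $\pi^\ast: \omega_{H'} \to \omega_H$. The map $\wt{\pi}^\ast$ differs from $\pi^\ast$ by multiplying the first $n-i$ basis components by $p^{-1}$, reflecting that on these components $\pi^\ast$ is divisible by $p$ and the rescaling restores an isomorphism. Consequently, given a basis $(v_1,\ldots,v_n)$ on the $A$-side, the corresponding basis $(v'_1,\ldots,v'_n) = \wt{\pi}^{\ast-1}(v_1,\ldots,v_n)$ on the $A'$-side equals the naive $(\pi^\ast)^{-1}$ pullback up to multiplication of its first $n-i$ components by $p$.

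The key step is the matrix computation. Writing the basis as a unipotent lower-triangular matrix whose $(k,j)$-entry records the deviation of $v_k$ from the HT reference $HT_w(\bar{x}_k)$ expanded along $HT_w(\bar{x}_j)$, the $\ul{w}$-compatibility condition says this entry is $p^{w_{k,j}}$-small. Passing to the $A'$-side rescales both the input and the output basis vectors by $p$ precisely when their index lies in the rescaled block; their product on the $(k,j)$-entry is a single factor of $p$ exactly on the off-diagonal block where the input is rescaled but the output is not, i.e.\ $j \leq i$ and $k \geq i+1$ in the filtration-to-cotangent convention of $\IW_{\ul{w}}^{0+}$. On the two diagonal blocks $j \leq k \leq i$ and $j \geq i+1$, the rescalings either both act or both fail to act, so they compensate and the $p$-adic order of the entry is unchanged. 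Hence the improvement is $+1$ in the middle block and nothing elsewhere, matching the stated $w'_{k,j}$.

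The main obstacle is the conventional bookkeeping: aligning the filtration index $i$ on $H[p]$ defining the correspondence $C_i$, the rescaling index $n-i$ on the basis of $\omega_{A',\tau}$ appearing in the explicit formula for $\wt{\pi}^\ast$, and the triangular index convention $(w_{k,j})$ of the refined Iwahori flag space $\IW_{\ul{w}}^{0+}$. Once these are reconciled, the argument is a direct transcription of the analogous computation in \cite{AIP}~Prop.~6.2.2 from the Siegel setting to the present $(1,n-1)$-signature, where only the $\tau$-component carries nontrivial flag structure and the other archimedean components contribute solely through the fixed auxiliary torsor $\T^{\tau,rig}$.
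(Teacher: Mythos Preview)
Your proposal is correct and follows exactly the approach the paper invokes: the paper's proof is simply ``Identical to the proof of proposition 6.2.2.2.\ in \cite{AIP},'' and what you have written is precisely an unpacking of that argument in the present $(1,n-1)$-signature setting, with the same local matrix computation and the same block-rescaling analysis.
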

\begin{proof}Identical to the proof of Proposition 6.2.2.2. in \cite{AIP}.
\end{proof}
By the above proposition we have
\[\wt{\pi}^{\ast-1}p_1^\ast(\IW_{\ul{w}}^{0+}\times\T^{\tau,rig})\subset
p_2^\ast(\IW_{\ul{w'}}^{0+}\times\T^{\tau,rig}).\]
Let $\ul{w}$ and $\ul{w}'$ be as above, $\kappa$ be a $\inf\{w_{j,j}\}$-analytic character. Then we have also the sheaves $\omega_{\ul{w}}^{\dag\kappa}$ and $\omega_{\ul{w}}^{\dag\kappa}$ on $X(v)$. For $2\leq i\leq n-1$ we define the Hecke operator $U_i$ as the composition:
\[H^0(X(v),\omega_{\ul{w'}}^{\dag\kappa})\st{p_2^\ast}{\ra}H^0(C_i(v),p_2^\ast\omega_{\ul{w'}}^{\dag\kappa})
\st{\wt{\pi}^{\ast}}{\ra}H^0(C_i(v),p_1^\ast\omega_{\ul{w}}^{\dag\kappa})\st{p^{-in}Trp_1}{\longrightarrow}
H^0(X(v),\omega_{\ul{w}}^{\dag\kappa}).\]
By abuse of notation, we also denote by $U_i$ the endomorphism of $H^0(X(v),\omega_{\ul{w}}^{\dag\kappa})$ obtained by composing the above operator with the restriction map $H^0(X(v),\omega_{\ul{w}}^{\dag\kappa})\ra H^0(X(v),\omega_{\ul{w'}}^{\dag\kappa})$.

Let $m\geq 1$ be some large integer. For $v<\inf\{\frac{1}{3p^{m-1}},\frac{p-2}{2p^2-p}\}$, $w\in ]\frac{v}{p-1},m-n-v\frac{p^m}{p-1}]$, and $\kappa$ a $w$-analytic character, the product $\prod_{i=1}^{n-1}U_i$ induces a map $H^0(X(\frac{v}{p}),\omega_{\ul{w'}}^{\dag\kappa})\ra H^0(X(v),\omega_{w}^{\dag\kappa})$ for some suitable
$\ul{w'}$ with $w_{k,j}'\geq w$. Let $U$ be the composition of $\prod_{i=1}^{n-1}U_i$ with the restriction map $H^0(X(v),\omega_{w}^{\dag\kappa})\ra H^0(X(\frac{v}{p}),\omega_{\ul{w'}}^{\dag\kappa})$. Then we have the following proposition.
\begin{proposition}$U$ is a compact endomorphism of $M_w^{\dag\kappa}(X(v))=H^0(X(v),\omega_{w}^{\dag\kappa})$.
\end{proposition}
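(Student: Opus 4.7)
The plan is to follow the standard strategy, as in \cite{AIP} Proposition 6.2.2.3, by exploiting the fact that the composite $U=\prod_{i=1}^{n-1}U_i$ genuinely incorporates both a shrinking of the overconvergence radius on the base (from $X(v)$ to $X(v/p)$, via the $U_1$-correspondence associated with the canonical subgroup) and an enlargement of the analyticity parameters (from $\ul{w}$ to some $\ul{w'}$ with $w_{k,j}'\geq w$, via the $U_i$ for $2\leq i\leq n-1$). Such shrinkings induce compact restriction maps between the corresponding Banach spaces of sections, and this compactness will transfer to $U$.

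Concretely, I would factor $U$ as
\[
M_w^{\dag\kappa}(X(v)) \xrightarrow{\;\rho\;} H^0(X(v/p),\omega_{\ul{w'}}^{\dag\kappa}) \xrightarrow{\;\prod U_i\;} H^0(X(v/p),\omega_w^{\dag\kappa}),
\]
where $\rho$ is the composite restriction in both the base and the analyticity parameters, and the second arrow is the genuine bounded operator induced by the Hecke correspondences constructed above. Since boundedness of the right-hand factor is automatic (the normalising powers of $p$ and the maps $\pi^{\ast-1}$ are built into its definition, and the proposition preceding the statement already identifies the source and target), and since compactness is preserved under composition with bounded operators, it suffices to show that $\rho$ is compact.

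To establish compactness of $\rho$, I would decompose it further as the composition of two restrictions: one in the base, along $X(v/p)\subset X(v)$, and one along the inclusion $\IW_{\ul{w'}}^{0+}\hookrightarrow \IW_{\ul{w}}^{0+}$ of Iwahoric neighbourhoods over $X(v/p)$. Both are inner open immersions in the rigid analytic sense: the first because $Hdg^{-1}([0,v/p])$ lies strictly inside $Hdg^{-1}([0,v])$ by continuity of $Hdg$ and $v/p<v$, and the second by the explicit description of the $\IW$-spaces as relative polydiscs whose radii shrink strictly when passing from $\ul{w}$ to the larger $\ul{w'}$. The classical Serre criterion then implies that each such restriction is completely continuous on affinoid sections.

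The main technical point, and the only step I expect to require real work, is to lift this scalar Serre-type complete continuity to sections of the orthonormalisable Banach sheaf $\omega_w^{\dag\kappa}$. I would handle this by choosing a finite affinoid cover of $X(v)$ over which $\omega_w^{\dag\kappa}$ trivialises (using the torsor structure $\IW_w^{0+}\to X(v)$ established in Section 3) as a completed tensor product $\mathcal{O}\,\widehat{\otimes}_L\,(V_{\kappa'}^{w\text{-}\mathrm{an}}\otimes V^\tau)$, reducing the compactness to the scalar affinoid case and then patching. The argument runs exactly as in \cite{AIP}, so the only conceptual input is that both contractions built into $U$ are strict enough to produce the required inner inclusions, which follows from the numerical hypotheses imposed on $v$ and $w$ in the statement.
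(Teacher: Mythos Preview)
Your proposal is correct and follows essentially the same approach as the paper. The paper's proof is extremely terse---it simply observes that $U_1$ improves the overconvergence radius, that the $U_i$ for $2\leq i\leq n-1$ improve analyticity, so each is compact, and hence their composition $U$ is compact---while you unpack the mechanism by explicitly factoring off the compact restriction $\rho$ and treating the remaining Hecke piece as bounded; these are two ways of packaging the same idea, and your reference to \cite{AIP} Proposition~6.2.2.3 is exactly the right model.
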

\begin{proof}The operator $U_1$ improves the radius of overconvergence, and for $2\leq i\leq r$ the operators $U_i$ improve analyticity, therefore all of them are compact. Moreover, the natural restriction map is compact. Hence their composition $U$ is compact.
\end{proof}

Let $\mathbb{T}_p$ be the algebra generated by $U_1,\dots,U_{n-1}$, and $\mathbb{T}^{K^p}$ be the commutative Hecke algebra defined in the end of the last subsection. For the above $v, w, \kappa$, we have the morphism of algebras
\[\mathbb{T}^{K^p}\otimes\mathbb{T}_p\ra End(M^{\dag\kappa}_w(X(v))).\]
As explained in Remark 4.1, although the different conventions on the correspondences $C_1,\dots,C_{n-1}$ may produce slightly different algebras $\mathbb{T}_p$, when passing to the image of the Hecke algebra $\mathbb{T}^{K^p}\otimes\mathbb{T}_p$ into $End(M^{\dag\kappa}_w(X(v)))$, these differences disappear.

\section{Analytic continuation and classicality}
Recall that we have fixed the weights $(\kappa_\sigma)_{\sigma\neq\tau}=(k_{1\sigma},\dots,k_{n\sigma})_{\sigma\neq\tau}\in\prod_{\sigma\neq\tau}\Z^n_+$.
Let $\kappa=\kappa_\tau=(k_1,\dots,k_{n-1},k_n)\in \Z^{n-1}_+\times\Z$ be $w$-analytic for some rational number $0<w\leq 1$. Then we have the inclusions
\[H^0(X,\omega^\kappa)\hookrightarrow H^0(X(v),\omega^\kappa)\hookrightarrow H^0(X(v),\omega_w^{\dag\kappa}).\]We will establish a criterion for an element in $H^0(X(v),\omega_w^{\dag\kappa})$ to be classical, i.e. in the image of $H^0(X,\omega^\kappa)$. For $\underline{a}=(a_1,\dots,a_{n-1})\in [0,+\infty]^{n-1}$, we set $M^{\dag\kappa}_w(X(v))^{<\underline{a}}$ for the union of the generalized eigenspaces where $U_{i}$ has finite slope $<a_i$ for $1\leq i\leq n-1$.
\begin{theorem}Let $\kappa=(k_1,\dots,k_{n-1},k_n)\in \Z^{n-1}_+\times\Z$ be $w$-analytic for some rational number $0<w\leq 1$.
Let $\underline{a}=(a_1,\dots,a_{n-1})\in\R^{n-1}_{\geq 0}$ with $a_i=k_{n-i}-k_{n-i+1}+1$ when $2\leq i\leq n-1$ and $a_1=k_n+k_{n-1}-n+1$. Then we have the inclusion
\[M_w^{\dag\kappa}(X(v))^{<\underline{a}}\subset H^0(X,\omega^\kappa).\]
\end{theorem}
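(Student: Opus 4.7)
The plan is to follow the strategy of Coleman--Kassaei and Pilloni--Stroh as implemented in \cite{AIP}, adapted to our unitary setting. Let $f\in M_w^{\dag\kappa}(X(v))^{<\ul a}$ be a generalized eigenvector for $U_i$ with eigenvalue $\alpha_i$ of valuation strictly less than $a_i$. Two independent obstructions separate $f$ from being classical: (i) the form is defined only on the strict neighborhood $X(v)$ of $]X_1^0[$, not on all of $X^{rig}$; (ii) $f$ takes values in the $w$-analytic induction $V_{\kappa'}^{w-an}\otimes V^\tau$ rather than in the algebraic subspace $V_{\kappa'}\otimes V^\tau$ corresponding to the coherent sheaf $\omega^\kappa$. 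The two obstructions are controlled respectively by the slope bound on $U_1$ and by those on $U_2,\dots,U_{n-1}$.

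For (i), I would exploit that $U_1$ improves the radius of overconvergence: the correspondence $C_1$ sends $X(v)$ into $X(v/p)$, so $U_1$ is naturally a map $H^0(X(v/p),\omega_w^{\dag\kappa})\to H^0(X(v),\omega_w^{\dag\kappa})$, and the eigenequation $f=\alpha_1^{-1}U_1 f$ already extends $f$ from $X(v)$ to $X(v/p)$. Iterating gives extensions to $X(v/p^N)$ for every $N$. To propagate $f$ out of the multiplicative-ordinary tube across the remaining ordinary components $]X_i^0[$ for $i\geq 2$, and then into the non-ordinary Newton strata $]\ov X^h[$ with $h<n-1$, I would apply Kassaei's gluing technique: one partially extends $f$ over each stratum by the geometric series $\sum_{k\geq 0}(\alpha_1^{-1}U_1)^k f$, and then uses the separatedness of $X^{rig}$ together with the description of the components $X_S^0$ in Subsection 2.3 to show that these local extensions match on overlaps. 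The bound $v(\alpha_1)<a_1=k_n+k_{n-1}-n+1$ is exactly what is needed to overcome the normalizing factor $p^{1-n}$ in the definition of $U_1$ and the part of the character $\kappa'_\tau$ coming into play under iteration, so that the geometric series converges.

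For (ii), once $f$ has been extended globally, classicality becomes a representation-theoretic question. I would write a dual BGG-type filtration of $V_{\kappa'}^{w-an}$ whose top quotient is $V_{\kappa'}$ and whose remaining graded pieces are images of the root operators $\Theta_i$ attached to the simple reflections in the Weyl group of $GL_{n-1}\times GL_1$. The operators $U_i$ for $2\leq i\leq n-1$ act diagonally on these graded pieces, picking up on the non-algebraic graded pieces an extra factor $p^{k_{n-i}-k_{n-i+1}+1}=p^{a_i}$ coming from the renormalization $\wt\pi^\ast$ in the definition of $U_i$ (which inserts $p^{-1}$ on the first $n-i$ basis vectors of $\omega_{A',\tau}$). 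The hypothesis $v(\alpha_i)<a_i$ thus forces the non-algebraic components of $f$ to vanish, so the globalized $f$ is in fact a section of the subsheaf $\omega^\kappa\hookrightarrow \omega_w^{\dag\kappa}$ on $X^{rig}$.

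To conclude, since $X$ is proper over $L$ and $\omega^\kappa$ is coherent, rigid-analytic GAGA identifies $H^0(X^{rig},\omega^\kappa)=H^0(X,\omega^\kappa)$, and $f$ is classical. The principal technical obstacle is the analytic continuation in step (i) across the non-ordinary strata: iteration of $U_1$ starting in $X(v)$ does not by itself reach $]\ov X^h[$ for small $h$, so one must patch partial extensions stratum by stratum, carefully tracking the $p$-adic denominators produced by the iteration, in the spirit of Section~8 of \cite{AIP}. By contrast, step (ii) reduces to a local representation-theoretic computation once $f$ is globally defined.
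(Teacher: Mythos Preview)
Your two-step decomposition is the right one, but you have the steps in the wrong order, and in the order you propose the argument breaks down. The sheaf $\omega_w^{\dag\kappa}$ is built out of the canonical subgroup $C_m$ via the torsor $\IW_w^{0+}$; it is only defined on $X(v)$ for $v$ small enough that the canonical subgroup exists. There is no way to make sense of a section of $\omega_w^{\dag\kappa}$ over the other ordinary components $]X_i^0[$ for $i\geq 2$ or over the non-ordinary strata, so your step (i) cannot be carried out as stated. (Incidentally, $X(v/p)\subset X(v)$, so the eigenequation $f=\alpha_1^{-1}U_1 f$ does not extend $f$ in the direction you claim.) The paper's proof does your step (ii) \emph{first}, on $X(v)$: the BGG exact sequence $0\to\omega^\kappa\to\omega_w^{\dag\kappa}\to\bigoplus_{\alpha\in\Delta}\omega_w^{\dag s_\alpha\cdot\kappa}$ together with the commutation relation $\Theta_\alpha U_i=\alpha(d_{n-i})^{\langle\kappa,\alpha^\vee\rangle+1}U_i\Theta_\alpha$ and the slope bounds on $U_2,\dots,U_{n-1}$ force $f\in H^0(X(v),\omega^\kappa)$. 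Only then, with $f$ a section of the \emph{classical} sheaf $\omega^\kappa$ (which \emph{is} defined on all of $X^{rig}$), does one perform the analytic continuation using $U_1$.

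A second issue is your description of the analytic continuation itself. The series $\sum_{k\geq 0}(\alpha_1^{-1}U_1)^k f$ is not the Kassaei series and does not converge (each term equals $f$). The correct construction decomposes the correspondence as $U_1=U_1^<\sqcup U_1^\geq$ over $]\ov X^{n-1}[_\varepsilon$ and over $]\ov X^{n-2}[$, and the approximating sections are $f_k=\sum_{j=0}^{k-1}\alpha_1^{-j-1}((U_1^\geq)^j\circ U_1^<)(f)$; the slope bound $v(\alpha_1)<k_n+k_{n-1}-n+1$ enters through the norm estimate $\|U_1^\geq\|\leq p^{-k_n-k_{n-1}+n-1+\varepsilon}$. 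Finally, one does not extend to every Newton stratum: since the complement of $]\ov X^{n-1}[\coprod]\ov X^{n-2}[$ has codimension $\geq 2$, it suffices to extend $f$ to this union, and then $H^0(X^{rig},\omega^\kappa)=H^0(]\ov X^{n-1}[\coprod]\ov X^{n-2}[,\omega^\kappa)$ together with GAGA finishes the proof.
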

The proof of this theorem consists of the following two propositions.

\begin{proposition}
Let $\underline{a'}=(+\infty,a_2,\dots,a_{n-1})\in [0,+\infty]^{n-1}$ with $a_i$ the same as in the above theorem for $2\leq i\leq n-1$. Then we have the inclusion
\[M_w^{\dag\kappa}(X(v))^{<\underline{a'}}\subset H^0(X(v),\omega^\kappa).\]
\end{proposition}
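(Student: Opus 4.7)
The plan is to run the BGG-type classicality argument of section 7 of \cite{AIP}, specialized to the simple reflections inside the Weyl group of the Levi factor $GL_{n-1}\times GL_1$ at $\tau$. Since $\underline{a'}$ imposes no constraint on $U_1$, no analytic continuation in the parameter $v$ is required; we only need to show that after restriction to $X(v)$, a small-slope $w$-analytic form of integral weight is already a section of the algebraic sheaf $\omega^\kappa$. Concretely, the $n-2$ operators $U_2,\dots,U_{n-1}$ correspond to the $n-2$ simple reflections of the Weyl group $S_{n-1}$ of $GL_{n-1}$, and jointly controlling their slopes should force the projection of $f$ to every BGG subquotient of $\omega_w^{\dag\kappa}/\omega^\kappa|_{X(v)}$ to vanish.

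First, using the \'etale-local description of Proposition 3.2.5, I would exhibit a decreasing filtration of the $B_\tau(\Z_p)\mathfrak{B}_w$-representation $V_{\kappa'}^{w-an}\otimes V^\tau$ whose top graded piece is the algebraic subrepresentation $V_{\kappa'}\otimes V^\tau$ and whose remaining graded pieces are the analytic inductions $V^{w-an}_{s_j\cdot\kappa'}\otimes V^\tau$ for the simple reflections $s_j$ ($j=1,\dots,n-2$) of $GL_{n-1}$ acting via the dot action. A direct weight computation gives $\langle\kappa',\alpha_j^\vee\rangle+1 = k_{n-j-1}-k_{n-j}+1 = a_{j+1}$, so that the piece attached to $s_j$ governs the slope bound for $U_{j+1}$. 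Globalizing the filtration via $\pi_\ast$ yields a sequence of coherent sheaves on $X(v)$,
\[0\longrightarrow \omega^\kappa|_{X(v)}\longrightarrow \omega_w^{\dag\kappa}\longrightarrow \mathcal{Q}^\kappa\longrightarrow 0,\]
with $\mathcal{Q}^\kappa$ filtered by sheaves $\mathcal{Q}^\kappa_j$ whose local model is $V^{w-an}_{s_j\cdot\kappa'}\otimes V^\tau$.

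Second, I would verify the Hecke-equivariance of this filtration and the key numerical estimate: on the graded piece $\mathcal{Q}^\kappa_j$, the operator $U_{j+1}$ acts with an extra factor of $p^{a_{j+1}}$ compared with its action on $\omega_w^{\dag\kappa}$. This is the core computation and should follow directly from tracing through the definition of $U_{j+1}$ in subsection 4.2: the isogeny $\pi$ defining $C_{j+1}$ alters the $(n-j-1)$-st step of the flag, and the normalization $v_i'\mapsto p^{-1}\pi^\ast v_i'$ used in the definition of $\wt{\pi}^{\ast-1}$ produces precisely the required scaling on the $s_j$-subquotient. Given $f\in M_w^{\dag\kappa}(X(v))^{<\underline{a'}}$, the image of $f$ in $H^0(X(v),\mathcal{Q}^\kappa_j)$ would then be a generalized $U_{j+1}$-eigenform with slope simultaneously $<a_{j+1}$ by hypothesis and $\ge a_{j+1}$ by the scaling, hence zero; so $f$ lies in the kernel of $\omega_w^{\dag\kappa}\twoheadrightarrow \mathcal{Q}^\kappa$, i.e.\ in $H^0(X(v),\omega^\kappa)$.

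The main obstacle is lifting the representation-theoretic BGG filtration to an honest filtration of coherent sheaves over the formal scheme $\mathfrak{X}(v)$, and verifying its precise compatibility with the Hecke correspondences $C_i$, rather than merely pointwise on fibres. Because the weights $(\kappa_\sigma)_{\sigma\ne\tau}$ are fixed classical data and all $U_i$ for $i\ge 2$ act purely through the $\tau$-component of the flag, this reduction should be essentially parallel to the corresponding step in \cite{AIP}, only simpler, since we work with a single Levi block $GL_{n-1}\times GL_1$ and without any symplectic pairing constraint.
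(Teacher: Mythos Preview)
Your proposal follows essentially the same route as the paper, which also appeals directly to the BGG argument of \cite{AIP}, sections 7.2--7.3. The only substantive difference is one of packaging: the paper does not construct a filtration of $\omega_w^{\dag\kappa}/\omega^\kappa|_{X(v)}$ with graded pieces locally modelled on $V^{w\textrm{-}an}_{s_j\cdot\kappa'}$, but instead uses the three-term exact sequence
\[0\longrightarrow \omega^\kappa \longrightarrow \omega_w^{\dag\kappa}\xrightarrow{\;d_1=\oplus\Theta_\alpha\;}\bigoplus_{\alpha\in\Delta}\omega_w^{\dag s_\alpha\cdot\kappa}\]
and the commutation relation $\Theta_\alpha U_i(f)=\alpha(d_{n-i})^{\langle\kappa,\alpha^\vee\rangle+1}U_i\Theta_\alpha(f)$, checked locally on fibres against the operators $\delta_i$ on $V_{\kappa'}^{w\textrm{-}an}$. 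Your filtration claim is in fact slightly imprecise: the quotient $V_{\kappa'}^{w\textrm{-}an}/V_{\kappa'}$ only \emph{embeds} into $\bigoplus_j V^{w\textrm{-}an}_{s_j\cdot\kappa'}$ via $\oplus\Theta_{\alpha_j}$ (this is the first step of the analytic BGG complex, and $d_1$ is not surjective once $n\ge 4$), so there is no filtration with exactly those graded pieces. This does not damage the argument --- injectivity is all that is needed, and your numerical identification $\langle\kappa',\alpha_j^\vee\rangle+1=a_{j+1}$ together with the scaling you describe for $U_{j+1}$ then forces $\Theta_{\alpha_j}(f)=0$ for every $j$, hence $f\in H^0(X(v),\omega^\kappa)$ --- but you should rephrase in terms of the maps $\Theta_\alpha$ rather than a filtration.
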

\begin{proof}
We argue as \cite{AIP} 7.2 and 7.3. First, there is a exact sequence of sheaves over $X(v)$
\[0\ra \omega^\kappa\st{d_0}{\longrightarrow}\omega_w^{\dag\kappa}\st{d_1}{\longrightarrow}\bigoplus_{\alpha\in \Delta}\omega_w^{\dag s_\alpha\cdot\kappa}.\] Here $\Delta$ is the set of simple roots of $GL_{n-1}\times GL_1$, $s_\alpha$ is the associated element in the Weyl group to $\alpha\in\Delta$, $s_\alpha\cdot\kappa$ is the usual dot action, $d_1=\oplus \Theta_\alpha: \omega_w^\kappa\ra \bigoplus_{\alpha\in \Delta}\omega_w^{\dag s_\alpha\cdot\kappa}$ is the sum of the maps $\Theta_\alpha$ which are defined as in subsection 7.2 of \cite{AIP}. Next, for $2\leq i\leq n-1$ we have the operators $\delta_i$ for $V_{\kappa'_\tau}^{w-an}$ as defined in loc. cit. 2.5 for $d_i=
 \left(\begin{array}{ccc}
 p^{-1}1_{n-i} & & \\
& 1_{i-1}&\\
& &1
  \end{array}\right)
\in GL_{n-1}(\Q_p)\times GL_1(\Q_p)$. $\delta_i$ extends naturally to an operator on $V_{\kappa'_\tau}^{w-an}\otimes V^\tau$ (by acting trivially on $V^\tau$, where $V^\tau$ is as in Prop. 3.7). Then one checks that locally on the fibers, the operators $U_i$ and $\delta_i$ are compatible for $2\leq i\leq n-1$. Using this we get
\[U_i\Theta_\alpha(f)=\alpha(d_{i})^{<\kappa,\alpha^\vee>+1}\Theta_\alpha U_i(f)\]from the corresponding equalities for the representation $V^{w-an}_{\kappa'_\tau}$ (subsection 2.5 of \cite{AIP}). One then uses the slope condition as the proof of loc. cit. Prop. 7.3.1 to conclude.
\end{proof}

\begin{proposition}
Let $H^0(X(v),\omega^\kappa)^{<k_n+k_{n-1}-n+1}$ be the eigenspace where $U_1$ has slope $<k_n+k_{n-1}-n+1$. Then
it is included in the space of classical forms
\[H^0(X(v),\omega^\kappa)^{<k_n+k_{n-1}-n+1}\subset H^0(X,\omega^\kappa).\]
\end{proposition}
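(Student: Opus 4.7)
The approach is analytic continuation of the eigenform $f$, in the spirit of Kassaei, Pilloni, and Andreatta--Iovita--Pilloni. The form $f\in H^0(X(v),\omega^\kappa)$ is initially defined on a strict neighborhood of the tube $]X_1^0[$ over the multiplicative-ordinary component; the goal is to extend it to a rigid-analytic section of $\omega^\kappa$ over the whole proper space $X^{rig}$. Once this is achieved, rigid GAGA applied to the proper scheme $X$ produces an algebraic section in $H^0(X,\omega^\kappa)$, which is the classicality statement we want.

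The basic device is to use the eigenvalue relation $f=a^{-1}U_1f$ recursively. Recall $U_1$ is defined via the correspondence $C_1$ parameterizing finite locally free subgroups $L\subset H[p]$ complementary to $\mathrm{Fil}_1\,H[p]$, with $\widetilde{L}_1=\Q_p^n\otimes L$. Unwinding the definition, at a point $x$ one has $af(x)=p^{1-n}\sum_L(\pi^{\ast-1}f)(x/\widetilde{L})$: the distinguished summand, corresponding to $L$ equal to the canonical subgroup $C_1$ (where it exists), lands in a region with strictly smaller Hasse invariant, where $f$ is already defined; the remaining anti-canonical summands a priori land in regions where $f$ is not yet defined. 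Substituting the eigenvalue relation at each of the bad summands and iterating gives a formal series
\[f(x)\;=\;\sum_{N\geq 0}a^{-N}\mathcal{K}_N f(x),\]
where $\mathcal{K}_N f(x)$ is a sum over length-$N$ anti-canonical descent paths of values of $f$ on the initial domain $X(v)$, weighted by the contribution of the composite isogeny on $\omega^\kappa$. The transformation rule $\mathrm{Hdg}(H/C)=p\cdot\mathrm{Hdg}(H)$ from Theorem 3.1(5), applied iteratively, controls the Hasse invariants along these paths, so each partial sum extends $f$ to a strictly larger admissible open.

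The core analytic estimate is that this series converges uniformly on quasi-compact subsets of $X^{rig}$ away from the supersingular locus. The normalization factor $p^{1-n}$ in the definition of $U_1$ contributes $-n+1$ to the valuation of each term, while the action of the anti-canonical isogeny on the two pieces of $\omega_\tau$ relevant to the canonical direction --- namely $\omega_\tau^1$ (rank one, carrying the weight $k_n$) and the lowest graded piece of $\omega_\tau^2$ (carrying the weight $k_{n-1}$) --- contributes $k_n+k_{n-1}$. The slope hypothesis $v(a)<k_n+k_{n-1}-n+1$ is precisely the breakpoint that makes the geometric series $\sum a^{-N}p^{N(k_n+k_{n-1}-n+1)}$ tend to zero, yielding a rigid-analytic extension $\tilde f$ of $f$ to the complement of the supersingular locus in $X^{rig}$.

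Finally, since the supersingular locus has codimension at least one in the normal rigid space $X^{rig}$ and $\tilde f$ is a bounded section of the coherent sheaf $\omega^\kappa$, I would invoke a Bartenwerfer--L\"utkebohmert-type rigid Riemann extension theorem to extend $\tilde f$ across the supersingular locus, obtaining a global section of $\omega^\kappa$ on $X^{rig}$; GAGA then identifies it with an element of $H^0(X,\omega^\kappa)$. The main obstacle I expect is the third step: tracking the Hasse invariants and the weight contributions along anti-canonical descent paths with enough precision to derive the sharp bound $k_n+k_{n-1}-n+1$, which requires careful bookkeeping of how the isogeny $\pi$ acts on each graded piece of $\omega^\kappa$ and how this interacts with the $w$-compatibility built into the definition of $\omega^{\dag\kappa}_w$. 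The final Riemann-extension step is softer but needs the normality of the integral model and a mild boundedness estimate on $\tilde f$ near the supersingular locus.
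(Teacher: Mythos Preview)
Your overall strategy---iterate $U_1$, split the correspondence into a good piece and a bad piece, bound the bad piece via the slope hypothesis---matches the paper's, but the execution has a real gap. First a correction of the setup: the subgroup $L$ parametrized by $C_1$ has order $p^{n-1}$ while the canonical subgroup $C$ of the one-dimensional $H$ has order $p$, so $L$ can never \emph{equal} $C$; the relevant dichotomy near the ordinary tube is $L\supset C$ (the piece $C_1^{\geq}$) versus $L\cap C=0$ (the piece $C_1^{<}$), and it is $U_1^{<}$ that lands in the region where $f$ is already known, while the norm bound $\|U_1^{\geq}\|\le p^{-k_n-k_{n-1}+n-1+\varepsilon}$ is what drives convergence of the Kassaei series $\sum_{j}\alpha^{-j-1}(U_1^{\geq})^j\circ U_1^{<}(f)$. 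You have the two roles reversed.

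The structural gap is the claim that this series extends $f$ to the complement of the supersingular locus. The decomposition $U_1=U_1^{<}+U_1^{\geq}$ is only available where the canonical subgroup exists, i.e.\ on a strict neighborhood $]\ov{X}^{n-1}[_{\varepsilon}$ of the ordinary tube; for $n\ge 3$ this omits all the intermediate strata $]\ov{X}^{h}[$ with $1\le h\le n-2$, and your Riemann extension step would then have to cross a codimension-one locus with no boundedness established near it. The paper instead proceeds in stages: (i) first extend $f$ to $deg^{-1}(]0,1])$ (where $deg(x)=deg(\textrm{Fil}_1 H_x[p])$) using only that the correspondence $U_1$ strictly increases this degree---no canonical subgroup required; (ii) then build a \emph{second} Kassaei series directly on $]\ov{X}^{n-2}[$, using a different splitting of $C_1$ according to whether $L^{et}=H[p]^{et}$, which is available over that tube via the local-\'etale sequence $0\to H[p]^0\to H[p]\to H[p]^{et}\to 0$; (iii) glue the two series \`a la Kassaei on the overlap. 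Since $H^0(X^{rig},\omega^\kappa)=H^0(]\ov{X}^{n-1}[\,\sqcup\,]\ov{X}^{n-2}[,\omega^\kappa)$, this already yields a global section, and Riemann extension is invoked only across codimension $\ge 2$.
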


We begin the preparation for the proof of the above proposition. It will be finished in Prop. 5.7. The method which we use here is by studying the analytic continuation of finite slope overconvergent eigenforms as in \cite{PS}. There is a related work \cite{Bi} of Bijakowski, where classicality results for modular forms over some general PEL type Shimura (with unramified local reductive groups) are proved. Although Bijakowski also used the method of analytic continuation to prove classicality results, there are still many differences between our approach below and that in \cite{Bi}. Namely, Bijakowski studied intensively the geometry near the region with integer degrees in the rigid analytic Shimura varieties, while we are mainly based on the geometry of the special fiber, which is simpler in our special case. In this sense our method is more close to that of \cite{PS}. We remark that there is a related result of Johansson in \cite{J} by studying the rigid cohomology for a more restricted sub class of our Shimura varieties.

We define a function of $deg$ over $X^{rig}$ as follows
\[\begin{split}deg: X^{rig}&\longrightarrow [0,1]\\x&\mapsto deg(\tr{Fil}_1H_x[p]).\end{split}\]
Then it is continuous, and note that $X(v)=deg^{-1}([1-v,1]), ]X_1^0[=deg^{-1}(1)=X(0)$. For any point $y\in U_1(x)$, we have $deg(y)\geq deg(x)$. Moreover, the equality holds if and only if $\tr{Fil}_1H_x[p]$ is a truncated $p$-divisible group of level one, see \cite{PS} Propositions 3.1.1 and 3.1.2. Therefore any finite slope eigenform for $U_1$ in $H^0(X(v),\omega^\kappa)$ extends to the analytic domain $deg^{-1}(]0,1])$. We would like to extend them further to the whole space $X^{rig}$. Since in our special case
\[H^0(X^{rig},\omega^\kappa)=H^0(]\ov{X}^{n-1}[\coprod]\ov{X}^{n-2}[,\omega^\kappa),\]
it suffices to extend these forms to the tube $]\ov{X}^{n-1}[\coprod]\ov{X}^{n-2}[$. We know that we can do this for the domain
\[deg^{-1}(]0,1])\cap(]\ov{X}^{n-1}[\coprod]\ov{X}^{n-2}[).\]
To extend to the remaining domain
\[deg^{-1}(0)\cap](\ov{X}^{n-1}[\coprod]\ov{X}^{n-2}[),\]we will use the condition that the slope for $U_1$ satisfies
$v(\alpha)<k_n+k_{n-1}-n+1$ to define some Kassaei series and then glue.

Consider a strict neighborhood $]\ov{X}^{n-1}[_\varepsilon$ of $]\ov{X}^{n-1}[$ such that the canonical subgroup $C\subset H[p]$ inside the universal $p$-divisible group $H$ exists. Then the correspondence $C_1^{rig}$ has the decomposition
\[C_1^{rig}\cap p_1^{-1}(]\ov{X}^{n-1}[_\varepsilon)=C_1^<\sqcup C_1^\geq\]when restricts to $]\ov{X}^{n-1}[_\varepsilon$, where $C_1^<$ parameterizes the subgroups $L\subset H[p]$ which has trivial intersection with $C$, and $C_1^\geq$ parameterizes the subgroups $L\subset H[p]$ which contains $C$. Then the degree of $L$ is $<1$ over $C_1^<$ while $\geq 1-\eta$ over $C_1^\geq$, where $0\leq \eta <1$ is some real number which depends on $\varepsilon$. Over the tube $]\ov{X}^{n-2}[$, we have also a decomposition
\[C_1^{rig}\cap p_1^{-1}(]\ov{X}^{n-2}[)=C_1^<\sqcup C_1^\geq,\] where $C_1^<$ parameterizes the subgroups $L\subset H[p]$ such that $L^{et}=H[p]^{et}$ which have height $n-2$, and $C_1^\geq$ parameterizes the subgroups $L\subset H[p]$ such that $L^{et}\subsetneq H[p]^{et}$ and thus the local parts $L^{0}=H[p]^{0}$. The degree of $L$ is $<1$ over $C_1^<$ while $=1$ over $C_1^\geq$. Here we have used the fact that, over the formal completion of $S_K$ along $\ov{X}^{n-2}$ there is an exact sequence of finite locally free groups
\[0\longrightarrow H[p]^0\longrightarrow H[p]\longrightarrow H[p]^{et}\longrightarrow 0,\]
which induces an exact sequence for the subgroup $L\subset H[p]$
\[0\longrightarrow L^0\longrightarrow L\longrightarrow L^{et}\longrightarrow 0,\]see \cite{PS} Lemme 5.1.2.
Therefore, we obtain decompositions of the Hecke operator $U_1$
\[U_1=U_1^<\sqcup U_1^\geq\]
over $]\ov{X}^{n-1}[_\varepsilon$ and $]\ov{X}^{n-2}[$. If we consider $U_1^<, U_1^\geq$ as maps on the set of admissible opens (defined by the correspondences $C_1^<, C_1^\geq$ respectively), then by construction \[U_1^<(]\ov{X}^{n-1}[_\varepsilon)\subset deg^{-1}(]0,1]), U_1^<(]\ov{X}^{n-2}[)\subset deg^{-1}(]0,1]).\]
\begin{remark}
Here we use our special signature condition $(1,n-1)\times (0,n)\times\cdots\times (0,n)$ to have the decompositions of the correspondence $C_1^{rig}$ over $]\ov{X}^{n-1}[_\varepsilon$ and $]\ov{X}^{n-2}[$. In general, one should work on the tube over each Kottwitz-Rapoport strata to get the decompositions of the related Hecke correspondence, see \cite{PS} section 5.
\end{remark}

Let $f\in H^0(X(v),\omega^\kappa)$ be an overconvergent eigenform for $U_1$ with eigenvalue $\alpha\neq 0$. We know that $f$ extends to a form over $deg^{-1}(]0,1])\cap (]\ov{X}^{n-1}[\coprod]\ov{X}^{n-2}[)$, which we still denote by $f$. Assume that $v(\alpha)<k_n+k_{n-1}-n+1$. We claim that there exists an overconvergent form $\tilde{f}$ over $]\ov{X}^{n-1}[\coprod]\ov{X}^{n-2}[$ which extends $f$.

Let $\varepsilon$ be a real number such that $k_n+k_{n-1}-n+1-v(\alpha)>\varepsilon>0$. We choose an $\varepsilon_0>0$ such that over any admissible open $V\subset ]\ov{X}^{n-1}[_{\varepsilon_0}$, the norm of $U_1^\geq$ has the estimate (cf. \cite{PS} Lemme. 5.4.6)
\[\|U_1^\geq\|_V\leq p^{-k_n-k_{n-1}+n-1+\varepsilon},\]
where for an operator $T: H^0(T(V), \Fm)\ra H^0(V,\Fm)$ ($\Fm$ is a locally free sheaf equipped with a norm $|\cdot|$), its norm over $V$ is defined as
\[\|T\|_V=\inf\{\beta\in\R^+|\,|T(f)|_V\leq \beta|f|_{T(V)},\,\forall\,f\in H^0(T(V),\Fm)\}.\]
Let $\varepsilon_k=(\frac{1}{N})^k\varepsilon_0$ for all $k\geq 1$. Here $N\geq 1$ is such that $U_1^\geq(]\ov{X}^{n-1}[_{\varepsilon_0})\subset ]\ov{X}^{n-1}[_{N\varepsilon_0}$ (loc. cit. Prop. 5.4.5).
For $k\geq 1$, $f$ as above, we define
\[f_k=\sum_{j=0}^{k-1}\alpha^{-j-1}((U_1^\geq)^j\circ U_1^<)(f),\]
which is a well defined element in $H^0(]\ov{X}^{n-1}[_{\varepsilon_k},\omega^\kappa)$.
We have the following proposition.
\begin{proposition}
For all $k\geq 1$, we have the estimate
\[|f_{k+1}-f_k|_{]\ov{X}^{n-1}[_{\varepsilon_{k+1}}}= O(p^{-k(k_n+k_{n-1}-n+1-v(\alpha)-\varepsilon)}).\]
\end{proposition}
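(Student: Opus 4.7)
The first move is to observe that the partial sums telescope: by construction
\[
f_{k+1}-f_k \;=\; \alpha^{-(k+1)}\,(U_1^{\geq})^k\bigl(U_1^<(f)\bigr),
\]
so the task reduces to bounding $|(U_1^{\geq})^k(U_1^<(f))|$ on $]\ov{X}^{n-1}[_{\varepsilon_{k+1}}$ and then multiplying by $|\alpha^{-(k+1)}|=p^{(k+1)v(\alpha)}$. Since $U_1^<$ lands inside $deg^{-1}(]0,1])$, where $f$ already extends, the element $U_1^<(f)$ is a well-defined section on the relevant strict neighborhood, and its sup-norm there is some finite constant $C=|U_1^<(f)|$ depending on $f$ but not on $k$.

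Next I would iterate the operator-norm estimate for $U_1^{\geq}$ along the chain of shrinking neighborhoods $\varepsilon_k=\varepsilon_0/N^k$. By the choice of $N$, each application of $U_1^{\geq}$ maps $]\ov{X}^{n-1}[_{\varepsilon_j}$ into $]\ov{X}^{n-1}[_{N\varepsilon_j}=\,]\ov{X}^{n-1}[_{\varepsilon_{j-1}}$, and on each of these opens the hypothesis gives $\|U_1^{\geq}\|\leq p^{-k_n-k_{n-1}+n-1+\varepsilon}$. Composing $k$ such bounds yields
\[
\|(U_1^{\geq})^k\|_{]\ov{X}^{n-1}[_{\varepsilon_{k+1}}} \;\leq\; p^{-k(k_n+k_{n-1}-n+1-\varepsilon)},
\]
and hence
\[
|f_{k+1}-f_k|_{]\ov{X}^{n-1}[_{\varepsilon_{k+1}}} \;\leq\; p^{(k+1)v(\alpha)}\cdot p^{-k(k_n+k_{n-1}-n+1-\varepsilon)}\cdot C,
\]
which is exactly the claimed $O\bigl(p^{-k(k_n+k_{n-1}-n+1-v(\alpha)-\varepsilon)}\bigr)$ (the extra factor $p^{v(\alpha)}$ is absorbed into the implicit constant).

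The main technical point to verify carefully is the compatibility of domains in this iteration: one has to check that the shrinking neighborhoods $\varepsilon_k$ remain inside the region where the operator-norm estimate of Kassaei–Pilloni–Stroh type is available, and that the Banach-sheaf norm $|\cdot|$ used on $\omega^\kappa$ is submultiplicative under pullback by $U_1^{\geq}$ (which comes from the fact that the universal isogeny defining $U_1^{\geq}$ together with the $p^{1-n}$-normalization controls how $\pi^\ast$ acts on the Hodge line bundle, and it is precisely this normalization that produces the exponent $k_n+k_{n-1}-n+1$ in \cite{PS} Lemme 5.4.6). Once these are in place, the estimate follows mechanically from the telescoping identity and the submultiplicativity of the operator norm; this is the analogue for our signature $(1,n-1)\times(0,n)\times\cdots\times(0,n)$ setting of the corresponding step in \cite{PS}, and the argument proceeds identically once the decomposition $U_1=U_1^<\sqcup U_1^{\geq}$ on $]\ov{X}^{n-1}[_{\varepsilon_0}$ has been established as above.
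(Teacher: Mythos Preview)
Your proof is correct and follows exactly the same approach as the paper: you identify the telescoping difference $f_{k+1}-f_k=\alpha^{-k-1}((U_1^\geq)^k\circ U_1^<)(f)$, use the boundedness of $U_1^<$, and iterate the norm estimate $\|U_1^\geq\|\leq p^{-k_n-k_{n-1}+n-1+\varepsilon}$ along the chain of neighborhoods. The paper's proof is simply a two-sentence summary of what you spell out in detail.
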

\begin{proof}
By definition over $]\ov{X}^{n-1}[_{\varepsilon_{k+1}}$
\[f_{k+1}-f_k=\alpha^{-k-1}((U^\geq_1)^k\circ U_1^<)(f).\]
Since the norm of $U^<_1$ is always bounded, we can conclude by the estimate for the norm of $U_1^\geq$.
\end{proof}

Over the tube $]\ov{X}^{n-2}[$ we have also the following definition.
For $k\geq 1$, $f$ as above, we define
\[f_k'=\sum_{j=0}^{k-1}\alpha^{-j-1}((U_1^\geq)^j\circ U_1^<)(f),\]
which is a well defined element in $H^0(]\ov{X}^{n-2}[,\omega^\kappa)$.
For any admissible open $V\subset ]\ov{X}^{n-2}[$ we have the following norm estimate
\[\|U_1^\geq\|_V\leq p^{-k_n-k_{n-1}+n-1}.\]
Similar to the above, we have the proposition.
\begin{proposition}
For all $k\geq 1$, we have the estimate
\[|f_{k+1}'-f_k'|_{]\ov{X}^{n-2}[}= O(p^{-k(k_n+k_{n-1}-n+1-v(\alpha))}).\]
\end{proposition}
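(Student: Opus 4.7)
The plan is to mirror the proof of the previous proposition almost verbatim, exploiting the sharper norm bound available over $]\ov{X}^{n-2}[$ (no $\varepsilon$-perturbation is needed because the decomposition $C_1^{rig}\cap p_1^{-1}(]\ov{X}^{n-2}[)=C_1^<\sqcup C_1^\geq$ is exact on the nose, not merely on a strict neighborhood). The starting point is the telescoping identity, immediate from the definition of the partial sums:
\[f_{k+1}'-f_k'=\alpha^{-k-1}\bigl((U_1^\geq)^k\circ U_1^<\bigr)(f).\]

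First I would check that every operator in this expression makes sense on $]\ov{X}^{n-2}[$. The form $f$ is only defined a priori on $deg^{-1}(]0,1])\cap (]\ov{X}^{n-1}[\coprod]\ov{X}^{n-2}[)$, but by the construction of $C_1^<$ we have $U_1^<(]\ov{X}^{n-2}[)\subset deg^{-1}(]0,1])$, so $U_1^<(f)$ is a well-defined section on $]\ov{X}^{n-2}[$. Moreover, the correspondence $C_1^\geq$ preserves the Newton stratum $]\ov{X}^{n-2}[$, since over this locus the subgroups parameterized by $C_1^\geq$ have local part equal to $H[p]^0$, and thus the quotient $H/L$ is still a one-dimensional $p$-divisible group with étale height $n-2$. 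Hence $(U_1^\geq)^k\circ U_1^<$ is an endomorphism of the appropriate space of sections on $]\ov{X}^{n-2}[$.

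Next I would apply multiplicativity of operator norms together with the two key estimates: the bound $\|U_1^\geq\|_V \leq p^{-k_n-k_{n-1}+n-1}$ stated just above the proposition (valid for any admissible open $V\subset ]\ov{X}^{n-2}[$), and the fact that $U_1^<$ has bounded operator norm independent of $k$. Combining, one obtains
\[|f_{k+1}'-f_k'|_{]\ov{X}^{n-2}[}\leq |\alpha|^{-k-1}\,\|U_1^\geq\|^{k}_{]\ov{X}^{n-2}[}\,\|U_1^<\|\,|f|\leq C\cdot p^{(k+1)v(\alpha)}\cdot p^{-k(k_n+k_{n-1}-n+1)},\]
which, after absorbing the fixed constant $C\cdot p^{v(\alpha)}$, is precisely $O(p^{-k(k_n+k_{n-1}-n+1-v(\alpha))})$.

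The main (and essentially only) non-trivial input is the norm estimate $\|U_1^\geq\|_V \leq p^{-(k_n+k_{n-1}-n+1)}$ itself; this is the analog on the $]\ov{X}^{n-2}[$-stratum of Lemme 5.4.6 in \cite{PS} and comes from analyzing how the universal isogeny acts on the sections of $\omega^\kappa$, tracking the $p$-power contributions coming from quotienting by a subgroup whose local part is all of $H[p]^0$. Since the statement is already invoked in the estimate preceding the proposition, the remaining argument is purely formal; I expect no further obstacles beyond checking that the norm multiplicativity and the boundedness of $U_1^<$ hold uniformly in $k$, which in turn follows from the fact that $U_1^<$ lands in a fixed quasi-compact subdomain of $deg^{-1}(]0,1])$ on which $f$ has a bounded norm.
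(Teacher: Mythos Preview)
Your proposal is correct and follows exactly the approach the paper intends: the paper simply says ``Similar to the above, we have the proposition,'' referring back to the telescoping identity $f_{k+1}'-f_k'=\alpha^{-k-1}((U_1^\geq)^k\circ U_1^<)(f)$ together with the boundedness of $U_1^<$ and the displayed norm estimate for $U_1^\geq$ on $]\ov{X}^{n-2}[$. Your additional remarks on why the operators are well-defined on this stratum are more careful than the paper itself, but entirely in line with its argument.
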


Let $f\in H^0(X(v),\omega^\kappa)$ be as above. Now we can prove the proposition, which will finish the analytic continuation of $f$ to the whole space $X^{rig}$ under the slope condition. Thus it is classical.
\begin{proposition}
There exists a unique section of $\omega^\kappa$ over
\[]\ov{X}^{n-1}[\coprod ]\ov{X}^{n-2}[\]
which extends $f$.
\end{proposition}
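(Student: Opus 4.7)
The plan is to assemble the Kassaei-style series from Propositions 5.5 and 5.6 into genuine global sections on each tube, check they coincide with $f$ on the overlap with $deg^{-1}(]0,1])$, and then invoke the coproduct structure to finish. Since $]\ov{X}^{n-1}[\coprod]\ov{X}^{n-2}[$ is literally a disjoint union of admissible opens, it suffices to produce an extension on each piece separately.

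First I would handle the $\mu$-ordinary tube $]\ov{X}^{n-1}[$. Since $k_n+k_{n-1}-n+1-v(\alpha)-\varepsilon>0$ by choice of $\varepsilon$, Proposition 5.5 shows that $(f_k)_{k\geq1}$ is a Cauchy sequence: on the nested strict neighborhoods $]\ov{X}^{n-1}[_{\varepsilon_k}$ with $\varepsilon_k=(1/N)^k\varepsilon_0\to0$, the differences $f_{k+1}-f_k$ have sup-norm going to $0$. By standard rigid-analytic glueing (cf.\ the analogous arguments in \cite{PS} and \cite{AIP} 7.5), the family $(f_k)$ defines a section $\widetilde f$ of $\omega^\kappa$ over the admissible open $\bigcup_k ]\ov{X}^{n-1}[_{\varepsilon_k}=]\ov{X}^{n-1}[$.

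Next, I would verify $\widetilde f|_{deg^{-1}(]0,1])\cap\, ]\ov{X}^{n-1}[}=f$. Using the eigenvalue equation $U_1 f=\alpha f$ together with the decomposition $U_1=U_1^<+U_1^\geq$ on $]\ov{X}^{n-1}[_{\varepsilon_0}$, one iteratively writes
\[
f=\sum_{j=0}^{k-1}\alpha^{-j-1}\bigl((U_1^\geq)^j\circ U_1^<\bigr)(f)+\alpha^{-k}(U_1^\geq)^k(f)=f_k+\alpha^{-k}(U_1^\geq)^k(f),
\]
and the norm estimate on $U_1^\geq$ in \cite{PS} Lemme 5.4.6 forces $|\alpha^{-k}(U_1^\geq)^k(f)|\to0$ on the overlap, exactly as in Proposition 5.5. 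Hence $f_k\to f$ on the overlap, proving $\widetilde f=f$ there. Uniqueness of the extension is automatic since $deg^{-1}(]0,1])\cap\,]\ov{X}^{n-1}[$ is a Zariski-dense admissible open in $]\ov{X}^{n-1}[$, so any two extensions that agree there agree everywhere by the identity principle.

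The argument on $]\ov{X}^{n-2}[$ is strictly simpler: Proposition 5.6 shows $(f_k')$ converges uniformly on the whole tube (no $\varepsilon$ parameter is needed, since $\|U_1^\geq\|$ is already bounded by $p^{-k_n-k_{n-1}+n-1}$ there), so it defines $\widetilde f'\in H^0(]\ov{X}^{n-2}[,\omega^\kappa)$, and the same telescoping argument shows $\widetilde f'=f$ on $deg^{-1}(]0,1])\cap\,]\ov{X}^{n-2}[$. Since the two tubes are disjoint admissible opens, gluing $\widetilde f$ and $\widetilde f'$ yields the desired section on $]\ov{X}^{n-1}[\coprod\,]\ov{X}^{n-2}[$, and uniqueness follows componentwise.

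The main obstacle, beyond the bookkeeping of radii $\varepsilon_k$, is the precise norm estimate $\|U_1^\geq\|_V\leq p^{-k_n-k_{n-1}+n-1+\varepsilon}$ over a neighborhood $V$ of $]\ov{X}^{n-1}[$; this is what makes the slope bound $v(\alpha)<k_n+k_{n-1}-n+1$ exactly sufficient to guarantee convergence, and it is the place where the signature $(1,n-1)\times(0,n)\times\cdots\times(0,n)$ (which gave the clean decomposition $C_1^{rig}=C_1^<\sqcup C_1^\geq$ and controls the action of $U_1^\geq$ on the automorphic bundle $\omega^\kappa$) is genuinely used. Once this bound is in hand, the rest is the formal Kassaei-glueing procedure.
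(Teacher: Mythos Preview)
There is a genuine gap. Your argument hinges on the sentence ``Since the two tubes are disjoint admissible opens, gluing $\widetilde f$ and $\widetilde f'$ yields the desired section.'' But $\{\,]\ov{X}^{n-1}[,\ ]\ov{X}^{n-2}[\,\}$ is \emph{not} an admissible covering of their union inside $X^{rig}$: the tube over an open stratum and the tube over the adjacent closed stratum partition the tube over their union only set-theoretically, and in rigid geometry that is not enough. (Already in dimension one: on the closed unit disk the pair $\{|z|<1\}$, $\{|z|=1\}$ is a set-theoretic partition by admissible opens, yet it is not an admissible covering, else the disk would be disconnected.) So producing $\widetilde f$ on $]\ov{X}^{n-1}[$ and $\widetilde f'$ on $]\ov{X}^{n-2}[$ separately, even after checking each agrees with $f$ on $deg^{-1}(]0,1])$, does \emph{not} give a section on the union. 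Incidentally, the strict neighborhoods $]\ov{X}^{n-1}[_{\varepsilon_k}$ are decreasing, so $\bigcap_k\,]\ov{X}^{n-1}[_{\varepsilon_k}=\,]\ov{X}^{n-1}[$, not $\bigcup_k$.

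The paper's proof is organized precisely to avoid this trap. It works with the strict neighborhoods $]\ov{X}^{n-1}[_{\varepsilon_k}$, which genuinely overlap $]\ov{X}^{n-2}[$, and compares $f_k$ with $f_k'$ on $V=\,]\ov{X}^{n-1}[_{\varepsilon_k}\cap\,]\ov{X}^{n-2}[$. This comparison is the missing ingredient in your approach: on $V$ one further decomposes $U_1^{\geq}=U_1'\sqcup U_1''$ (according to whether $L^{et}=H[p]^{et}$ or not), obtains $f_k-f_k'=-\alpha^{k}\sum_{j=1}^{k}(U_1'')^{k-j}\circ(U_1')^{j}(f)$, and bounds this by $O(p^{-k(k_n+k_{n-1}-n+1-v(\alpha)-\varepsilon)})$ using the same norm estimates. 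Only then does Kassaei's gluing lemma apply to the admissible data $\{\,]\ov{X}^{n-1}[_{\varepsilon_k},\ ]\ov{X}^{n-2}[\,\}$ and produce a single section on the target. Your telescoping identity and the norm bounds you cite are correct and are used, but they are inputs to the gluing on an overlapping cover, not a substitute for it.
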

\begin{proof}
We consider the larger domain
\[]\ov{X}^{n-1}[_{\varepsilon_k}\bigcup]\ov{X}^{n-2}[\] for all $k\geq 1$. By the above we can define $f_k$ and $f_k'$ over $]\ov{X}^{n-1}[_{\varepsilon_k}$ and $]\ov{X}^{n-2}[$ respectively. Let $V=]\ov{X}^{n-1}[_{\varepsilon_k}\bigcap
]\ov{X}^{n-2}[$. We study the norm of $f_k-f_k'$ over $V$. First we consider $V$ as a sub domain of $]\ov{X}^{n-1}[_{\varepsilon_k}$. Then the operator $U_1$ has a decomposition
\[U_1=U_1^\geq\sqcup U_1^<\] according to the decomposition of the correspondence $C_1^{rig}$ when restricting to $p_1^{-1}(V)$. Now we consider $V$ as a sub domain of $]\ov{X}^{n-2}[$, the operator $U_1^\geq$ has a further decomposition
\[U_1^\geq=U_1'\sqcup U_1'',\]
where the part $U_1'$ corresponds to the sum over the subgroups $L$ of $H[p]$ such that $L\cap \tr{Fil}_1H[p]=0$ and $L^{et}=H[p]^{et}$ (which have height $n-2$), and the part $U_1''$ corresponds to the sum over the rest subgroups $L$ of $H[p]$ such that $L\cap \tr{Fil}_1H[p]=0$ and $L^{et}\subsetneq H[p]^{et}$ (which have degree 1). Therefore, the image of $U_1'$ is included in $deg^{-1}(]0,1])$. We have
\[f_k-f_k'=-\alpha^k(\sum_{j=1}^k(U_1'')^{k-j}\circ(U_1')^j)(f).\]
By using similar estimate for the norms of the operators $U_1'$ and $U_1''$, we have
\[|f_k-f_k'|_V=O(p^{-k(k_n+k_{n-1}-n+1-v(\alpha)-\varepsilon)}).\]
This will suffice to prove the proposition by using Kassaei's gluing lemma in \cite{Ka1}.
\end{proof}

\section{Families of overconvergent automorphic forms}

Let $\mathcal{U}=Sp(A)\subset\W$ be any affinoid open subset. Then there exists a $w_{\mathcal{U}}>0$ such that the universal character $\kappa^{un}: T_\tau(\Z_p)\times\W\ra\Cm_p^\times$ restricted to $\mathcal{U}$ extends to an analytic character $\kappa^{un}: T_\tau(\Z_p)(1+p^{w_{\mathcal{U}}}O_{\Cm_p})\times\mathcal{U}\ra\Cm_p^\times$, cf. \cite{AIP} Prop. 2.2.2. Let $m\in \mathbb{N}, v\leq\frac{1}{2p^{m-1}}$ (resp. $\frac{1}{3p^{m-1}}$ for $p=3$) and $w\in ]m-1+\frac{v}{p-1},m-v\frac{p^m}{p-1}]$ satisfying $w\geq w_\mathcal{U}$. We have the following proposition which says the construction in section 3 works in families, see also loc. cit. Prop. 8.1.1.1.
\begin{proposition}
There exists a sheaf $\omega_w^{\dag\kappa^{un}}$ on $X(v)\times\mathcal{U}$ such that for any weight $\kappa\in\mathcal{U}$, the fiber of $\omega_w^{\dag\kappa^{un}}$ over $X(v)\times\kappa$ is $\omega_w^{\dag\kappa}$.
\end{proposition}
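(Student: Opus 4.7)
The plan is to run the construction of Section 3 in families over $\mathcal{U}$ and check that specialization at a single weight recovers the single-weight sheaf. The fact that makes this work is that $\kappa^{un}$ is $w_\mathcal{U}$-analytic by the choice of $\mathcal{U}$, and $w\geq w_\mathcal{U}$, so the universal character is compatible with the formal group $\mathfrak{B}_w$ that appears in the construction.

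First, I would base change the whole tower to $\mathcal{U}$. Write $\mathfrak{U}=\mathrm{Spf}\,A^\circ$ for a formal model of $\mathcal{U}$, and form the fiber products $\mathfrak{X}(v)\times\mathfrak{U}$, $\mathfrak{X}(p^m)(v)\times\mathfrak{U}$, $\mathfrak{X}_1(p^m)(v)\times\mathfrak{U}$, $\mathfrak{IW}_w\times\mathfrak{U}$ and $\mathfrak{IW}_w^+\times\mathfrak{U}$, with the projection $\pi_\mathcal{U}=\pi\times\mathrm{id}:\mathfrak{IW}_w^+\times\mathfrak{U}\to\mathfrak{X}(v)\times\mathfrak{U}$. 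The $w$-analyticity hypothesis says that the universal character extends to an analytic character $\kappa^{un}:T_\tau(\Z_p)(1+p^wO_{\Cm_p})^n\times\mathcal{U}\to\Cm_p^\times$, and by the involution discussed in Section 3 the same is true for $\kappa^{un,\prime}$. In particular, $\kappa^{un,\prime}$ extends to a character
\[
\kappa^{un,\prime}: B_\tau(\Z_p)\mathfrak{B}_w\longrightarrow A^{\circ,\times},
\]
with $U_\tau(\Z_p)\mathfrak{U}_w$ acting trivially, exactly as in the single-weight case but with values in $\mathcal{O}(\mathcal{U})$ rather than $O_K$.

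Next, I would define the family of sheaves by the same recipe. The group $B_\tau(\Z_p)\mathfrak{B}_w$ acts on $\mathfrak{IW}_w^+\times\mathfrak{U}$ over $\mathfrak{X}(p^m)(v)\times\mathfrak{U}$ (pulled back from the action on $\mathfrak{IW}_w^+$), and one sets
\[
\mathfrak{m}_w^{\dag\kappa^{un}}:=\bigl(\pi_\mathcal{U,*}\mathcal{O}_{\mathfrak{IW}_w^+\times\mathfrak{U}}\bigr)[\kappa^{un,\prime}]\otimes\wt{\omega^\tau},
\]
where the bracket denotes the $\kappa^{un,\prime}$-isotypic part for the $B_\tau(\Z_p)\mathfrak{B}_w$-action, and $\wt{\omega^\tau}$ is pulled back from $\mathfrak{X}$. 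Taking the generic fiber produces a sheaf $\omega_w^{\dag\kappa^{un}}$ on $X(v)\times\mathcal{U}$. Everything in Section 3 (the descent from $\mathfrak{IW}_w^+$ to $\mathfrak{IW}_w^{0+}$, the $T_w$-torsor structure, and the identification with sections of a locally free $\mathfrak{T}_w$-equivariant sheaf) pulls back cleanly along the projection to $\mathfrak{U}$, so the same arguments give a well-defined formal Banach sheaf.

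Finally I would check fiberwise compatibility. For a point $\kappa\in\mathcal{U}(K)$, the pullback of $\kappa^{un}$ along $\mathrm{Spf}\,O_K\to\mathfrak{U}$ is exactly $\kappa$, and pulling back $\mathfrak{m}_w^{\dag\kappa^{un}}$ along $\mathfrak{X}(v)\to\mathfrak{X}(v)\times\mathfrak{U}$, $x\mapsto(x,\kappa)$, yields the isotypic part of $\pi_*\mathcal{O}_{\mathfrak{IW}_w^+}$ for the character $\kappa'$, tensored with $\wt{\omega^\tau}$. This is the definition of $\mathfrak{m}_w^{\dag\kappa}$, and the same identification passes to the generic fiber.

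The main technical point I expect to have to treat with care is flatness of the construction over $\mathcal{U}$: one needs $\pi_\mathcal{U,*}\mathcal{O}_{\mathfrak{IW}_w^+\times\mathfrak{U}}$ together with its $\kappa^{un,\prime}$-isotypic decomposition to commute with base change $-\otimes_{A^\circ}O_K$ for any $\kappa\in\mathcal{U}(K)$. This is established in loc.\ cit.\ by exhibiting $\pi_*\mathcal{O}_{\mathfrak{IW}_w^+}[\kappa']$, locally on $\mathfrak{X}(v)$, as a space of $w$-analytic functions on $N^0$, whose formation is flat in the weight (it is an analytic induction); the relative version here is the same statement with $L$ replaced by $A$, so the argument of \cite{AIP} Prop.~8.1.1.1 applies verbatim once one has identified the players above.
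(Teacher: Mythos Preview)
Your proposal is correct and follows essentially the same approach as the paper: define the family sheaf as the $(\kappa^{un})'$-isotypic part of the pushforward of the structure sheaf along $\pi\times 1$, tensored with $\omega^\tau$. The paper's proof is much terser---it simply writes down $\omega_w^{\dag\kappa^{un}}=(\pi\times1)_\ast\mathcal{O}_{\IW_w^{0+}\times\mathcal{U}}[(\kappa^{un})']\otimes\omega^\tau$ on the rigid side and stops---while you work on the formal side first and discuss base change, but the content is the same (your flatness/base-change discussion belongs more properly to the next proposition on projectivity and surjectivity of specialization).
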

\begin{proof}
Consider the projection $\pi\times1: \IW_w^{0+}\times\mathcal{U}\ra X(v)\times\mathcal{U}$. Recall we have the bundle $\omega^\tau=\otimes_{\sigma\neq\tau}\omega_\sigma^{\kappa_\sigma}$ over $X(v)$ for the fixed weight $(\kappa_\sigma)_{\sigma\neq\tau}$ apart from $\tau$. We take $\omega_w^{\dag\kappa^{un}}=(\pi\times1)_\ast\mathcal{O}_{\IW_w^{0+}\times\mathcal{U}}[(\kappa^{un})']
\otimes\omega^\tau$.
\end{proof}

Let $M_{v,w}$ be the Banach $A$-module $H^0(X(v)\times\mathcal{U},\omega_w^{\dag\kappa^{un}})$. Then similar to section 4, there is an action of the Hecke algebra $\mathbb{T}^{K^p}\otimes\mathbb{T}_p$ on $M_{v,w}$ for $v$ small enough.  For $w'\in\Q_{>0}$,  as $\W(w')$ is affinoid, we can consider the admissible open subspace $\mathcal{U}\subset\W$ in the form $\mathcal{U}=\W(w')=Sp(A)$. In this case, we can take $w=w'$.
\begin{proposition}
\begin{enumerate}
\item The Banach $A$-module $M_{v,w}$ is projective.
\item For any $\kappa\in\mathcal{U}$, the specialization map
\[M_{v,w}\ra H^0(X(v),\omega_w^{\dag\kappa})\] is surjective.
\end{enumerate}
\end{proposition}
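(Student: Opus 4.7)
The plan is to adapt Proposition 8.1.1.1 of \cite{AIP} essentially verbatim. By construction, $\omega_w^{\dag\kappa^{un}} = (\pi\times 1)_\ast \mathcal{O}_{\IW_w^{0+}\times\mathcal{U}}[(\kappa^{un})'] \otimes \omega^\tau$ is a formal Banach sheaf (in the sense of loc.\ cit.\ Appendix A) over $\mathfrak{X}(v) \times \mathfrak{W}(w)$. Both assertions will follow from combining (a) local orthonormalizability of its Banach module of sections, with (b) the vanishing of higher cohomology $H^i(\mathfrak{X}(v)\times\mathcal{U}, \omega_w^{\dag\kappa^{un}}) = 0$ for $i \geq 1$.

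For part (1), over any affine open $\textrm{Spf}(R) \subset \mathfrak{X}(v)$ over which the torsor $\IW_w^{0+}$ and the bundle $\omega^\tau$ are trivialized, the sections of $\omega_w^{\dag\kappa^{un}}$ are isomorphic, as Banach $R\wh{\otimes}A$-modules, to the completed tensor product of $R\wh{\otimes}A$ with $V_{(\kappa^{un})'}^{w-an} \otimes \wt{\omega^\tau}(\textrm{Spf}(R))$. The $w$-analyticity of the universal character $\kappa^{un}$ over $\mathcal{U} = \W(w)$ (guaranteed by $w \geq w_{\mathcal{U}}$) ensures that $V_{(\kappa^{un})'}^{w-an}$ is a potentially orthonormalizable $A$-module. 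Choosing a finite affine cover $\{\mathfrak{U}_i\}$ of $\mathfrak{X}(v)$, the \v{C}ech complex computing $M_{v,w}$ has each term orthonormalizable over $A$. Together with the vanishing (b), this exhibits $M_{v,w}$ as a closed direct summand of an orthonormalizable Banach $A$-module, hence projective by Buzzard's criterion.

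For part (2), consider the short exact sequence of sheaves on $X(v) \times \mathcal{U}$
\[
0 \longrightarrow \omega_w^{\dag\kappa^{un}} \otimes_A I_\kappa \longrightarrow \omega_w^{\dag\kappa^{un}} \longrightarrow \omega_w^{\dag\kappa} \longrightarrow 0,
\]
where $I_\kappa \subset A$ is the maximal ideal at $\kappa$. Taking global sections and using $H^1(X(v)\times\mathcal{U}, \omega_w^{\dag\kappa^{un}} \otimes_A I_\kappa) = 0$ (another application of (b)) gives the surjection $M_{v,w} / I_\kappa M_{v,w} \twoheadrightarrow H^0(X(v), \omega_w^{\dag\kappa})$, as required.

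The main obstacle is the cohomology vanishing (b). Since $X$ is proper, $\mathfrak{X}(v)$ is not affine and one cannot simply invoke affineness of the base. In \cite{AIP} this is resolved by constructing an affinoid anticanonical tower above $\mathfrak{X}(v)$ and invoking the small formal Banach sheaf formalism of their Appendix A. In our setting, the multiplicative-ordinary locus $]X_1^0[$ plays exactly the role of the anticanonical locus: its strict neighborhoods in the tower at deep Iwahori level $K^p(m)$ should admit affine formal models by the same anticanonical construction, using successive quotients by canonical subgroups. Once this affinoid anticanonical locus is exhibited in our unitary case, descent along the finite flat projections $\pi_3, \pi_4$ and the affine morphism $\pi \times 1$ is formal, and the vanishing reduces to the analogous statement in \cite{AIP}.
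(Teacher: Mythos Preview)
Your overall strategy (formal Banach sheaf, \v{C}ech resolution by orthonormalizable modules, then specialization via a Koszul-type argument) matches the paper. However, you have introduced an unnecessary complication by asserting that ``since $X$ is proper, $\mathfrak{X}(v)$ is not affine.'' This is backwards: precisely \emph{because} $X$ is proper, the locus $X(v)=Hdg^{-1}([0,v])$ is affinoid. Indeed, $\omega_{\mathcal{H}}^{\otimes(p-1)}$ is ample on $Y$, the non-vanishing locus of the Hasse invariant is affine in the special fiber, and the strict neighborhood $Y(v)$ is therefore affinoid; $X(v)\to Y(v)$ is finite, so $X(v)$ is affinoid as well. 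The paper uses exactly this observation (``since $X(v)$ is affinoid\ldots''), which is why it remarks that ``our Shimura varieties are proper, so the things here are simpler.''

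Consequently, your ``main obstacle'' (b) evaporates: $X(v)\times\mathcal{U}$ is affinoid, so the \v{C}ech complex associated to a finite affine cover of $\mathfrak{X}(v)$ already resolves $M_{v,w}$ by (potentially) orthonormalizable $A$-modules, giving projectivity directly. There is no need to import the anticanonical tower from \cite{AIP}; that machinery is there to compensate for the non-compactness of the Siegel variety, which is absent here.

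For part (2), your short exact sequence with $I_\kappa$ is morally right but technically imprecise: the weight space $\W$ is $n$-dimensional, so $I_\kappa$ is not principal, and one should use the Koszul resolution of $A/\mathfrak{m}_\kappa$ rather than the single ideal sequence. The paper does this explicitly, forming the double complex from the \v{C}ech complex of the Koszul resolution and invoking the affinoid vanishing to conclude surjectivity (cf.\ the proof of \cite{AIP} Cor.~8.2.3.2).
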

\begin{proof}
The proof is similar to the proof of Prop. 8.2.3.3 in \cite{AIP}, except that our Shimura varieties are proper, so the things here are simpler. First, note that $\mathfrak{m}_w^{\dag\kappa^{un}}$ over $\mathfrak{X}(v)\times \mathfrak{W}(w)$ is a formal Banach sheaf, where $\mathfrak{W}(w)$ is a formal model of $\W(w)$, and $\mathfrak{m}_w^{\dag\kappa^{un}}$ is constructed in the same way as $\mathfrak{m}_w^{\dag\kappa}$, which is a formal model of the sheaf $\omega_w^{\dag\kappa^{un}}$ in the above proposition. Next, since $X(v)$ is affinoid, we can take a finite affine covering $\mathfrak{U}=(\mathfrak{V}_j)_{1\leq j\leq k}$ of $\mathfrak{X}(v)$. Let $\ul{i}=(i_1,\dots,i_j)$ be an index with $1\leq i_1<\cdots<i_j\leq k$, and $\mathfrak{V}_{\ul{i}}$ be the intersection of $\mathfrak{V}_{i_l}$ for $1\leq l\leq j$ which is again an affine formal scheme. Let $M_{\ul{i}}=H^0(\mathfrak{V}_{\ul{i}}\times\mathfrak{W}(w),\mathfrak{m}_w^{\dag\kappa^{un}})$, then it is isomorphic to the $p$-adic completion of a free $B$-module, where $\mathfrak{W}(w)=Spf(B)$ (so $A=B[\frac{1}{p}]$). Let $M=M_{v,w}=H^0(\mathfrak{X}(v)\times \mathfrak{W}(w),\mathfrak{m}_w^{\kappa^{un}})[\frac{1}{p}]$. Since the rigid analytic fiber of the covering $\mathfrak{U}$ forms a covering of $X(v)$ and $X(v)\times\W(w)$ is affinoid, we get a resolution of $M$ by the projective modules $M_{\ul{i}}[\frac{1}{p}]$. As a result $M$ is projective. Finally, the specialization map is surjective by considering the Koszul resolution of $A/m_\kappa$ and the double complex obtained by taking the Cech complex of the Koszul complex, see the proof of Corollary 8.2.3.2 of \cite{AIP}.

\end{proof}

By the above proposition, one can apply Coleman's spectral theory as in \cite{AIP} 8.1.2 or in \cite{Bu} 5.7 to construct an equidimensional eigenvariety over $\W$. More precisely we have the following theorem. In the following we denote $\mathcal{O}(\mathcal{E})^{red}$ as the reduced algebra associated to $\mathcal{O}(\mathcal{E})$.
\begin{theorem}
There is a rigid analytic variety $\mathcal{E}$ over $L$ and a locally finite map to the weight space $w:\mathcal{E}\ra\W$, such that
\begin{enumerate}
\item $\mathcal{E}$ is equidimensional of dimension $n$.
\item We have a character $\Theta: \mathbb{T}^{K^p}\otimes\mathbb{T}_p\ra \mathcal{O}(\mathcal{E})$. For any $\kappa\in\W$, $w^{-1}(\kappa)$ is in bijection with the eigensystems of $\mathbb{T}^{K^p}\otimes\mathbb{T}_p$ acting on the space of finite slope locally analytic overconvergent automorphic forms of weight $\kappa$.
\item For any $\kappa=(k_1,\dots,k_{n-1},k_n)\in \Z^{n-1}_+\times\Z\subset\W$, if $x\in w^{-1}(\kappa)$ satisfies  $v(\Theta_x(U_i))<k_{n-i}-k_{n-i+1}+1$ for $2\leq i\leq n-1$ and $v(\Theta_x(U_{1}))<k_n+k_{n-1}-n+1$, then the character $\Theta_x$ comes from a weight $\kappa$ automorphic eigenform on $X$. Here $\Theta_x$ is the composition of $\Theta$ with the evaluation map $ev_x: \mathcal{O}(\mathcal{E})\ra k(x)$ ($k(x)$ is the residue field of $x$).
\item There is a Galois pseudo-character $T: Gal(\ov{\Q}/F)\ra \mathcal{O}(\mathcal{E})^{red}$, such that for any point $x\in \mathcal{E}$, there is a continuous semi-simple representation $\rho_x: Gal(\ov{\Q}/F)\ra GL_n(\overline{k(x)})$ and the trace of this Galois representation is $T_x$. Here $T_x$ is the composition of $T$ with the evaluation map $ev_x: \mathcal{O}(\mathcal{E})^{red}\ra k(x)$.
\end{enumerate}
\end{theorem}
\begin{proof}
(1) and (2) come from the construction. (3) was proved in section 5. For (4), we use the density of classical points as proved in (3) and the results of Harris-Taylor in \cite{HT} to get the desired Galois pseudo-character, as in \cite{BC} 7.5.2. We sketch the proof as follows. Let $Z\subset\mathcal{E}$ be the subset defined by (3), then it is Zariski dense in $\mathcal{E}$. For any point $x\in Z$, by the main result (Theorem C) of \cite{HT} we have a Galois representation $\rho_x: Gal(\ov{\Q}/F)\ra GL_n(\ov{\Q}_p)$, such that $Tr\rho_x(Frob_l)=\Theta_x(T_l)$ for any $l\neq p$ and such that $K_l$ is hyperspecial. Here $Frob_l$ is the geometric Frobenius at $l$. Consider the map
\[\prod_{x\in Z}Tr\rho_x: Gal(\ov{\Q}/F)\ra \prod_{x\in Z}\ov{\Q}_p.\]Since $Z$ is Zariski dense in $\mathcal{E}$, the map $f\mapsto (f(x))_{x\in Z}$ induces a closed immersion \[(\mathcal{O}(\mathcal{E})^{red})^{\leq 1}\hookrightarrow \prod_{x\in Z}\ov{\Q}_p.\] Here the upper subscript $\leq 1$ means that we consider the subset of functions with norm $\leq 1$. On $\prod_{x\in Z}\ov{\Q}_p$ we take the product topology for the $p$-adic topology on $\ov{\Q}_p$. The statement that the image of $(\mathcal{O}(\mathcal{E})^{red})^{\leq 1}$ in $\prod_{x\in Z}\ov{\Q}_p$ is closed, comes from the fact that $(\mathcal{O}(\mathcal{E})^{red})^{\leq 1}$ is compact, see Lemma 7.2.10 of \cite{BC}. Moreover, since the prime to $p$ Hecke operators $T_l$ have norm $\leq 1$, we see that the elements $\ov{\Theta(T_l)}$ (image of $\Theta(T_l)$ in $\mathcal{O}(\mathcal{E})^{red}$) are included in $(\mathcal{O}(\mathcal{E})^{red})^{\leq 1}$. Therefore by Cebotarev's theorem we can conclude that $\prod_{x\in Z}Tr\rho_x$ factors through $(\mathcal{O}(\mathcal{E})^{red})^{\leq 1}$. Thus the desired Galois pseudo-character $T$ exists (for the fact that $T_x=Tr\rho_x$ which takes values in $k(x)$, see p.172 of \cite{BC}).
\end{proof}
\begin{remark}
It will be interesting to study the properties of the Galois representations $\rho_x$ for $x\in \mathcal{E}$. For example, when is it crystalline or potentially semi-stable? By the main results of \cite{HT} and \cite{Shin}, it is true that the crystalline points (that are the points $x\in\mathcal{E}$ such that $\rho_x$ are crystalline) are dense in $\mathcal{E}$, see \cite{BC} 7.6. Also it will be interesting to find applications of our eigenvarieties to the construction of Galois representations as in \cite{Ch2}.
\end{remark}
\begin{remark}
In this paper we worked out the construction of some equidimensional eigenvarieties for the Shimura varieties studied by Harris-Taylor with local reflex field $\Q_p$. An immediate extension to the Shimura varieties studied in \cite{Shin} works. One may try to work in the general case that the local reflex field is not necessary $\Q_p$. However, for this one will need a theory of canonical subgroups for $\varpi$-divisible $O$-modules in the sense of section II.1 of \cite{HT} ($O$ is the integer ring of some finite extension of $\Q_p$, $\varpi$ is a fixed uniformizer of $O$). Here one should consider the Faltings dual instead of the usual Cartier-Serre dual, and the expected canonical subgroups should be strict in the sense of \cite{F}. This theory should be as good as the theory of canonical subgroups for usual $p$-divisible groups as in \cite{F2} (the case for dimension one may be easier, see \cite{Ka} and \cite{Br} for the case of height two). Since the first version of this article appeared, recently in a joint work \cite{ST} with Tian, we have developed a such theory and applied it to the construction of eigenvarieties in some more general case along the same line as here. For example, we can treat the case of unitary Shimura varieties with signature $(d,n-d)\times(0,n)\times\cdots\times(0,n)$. For $p$-divisible groups with more general additional structures induced from more general PEL type Shimrua varieties, one will need a theory of more generalized canonical subgroups. Once such a theory is available, combined with the theory of arithmetic compactification of PEL type Shimura varieties (with good or bad reductions, see \cite{L}), one can deal with any PEL type Shimura varieties (compact or not) with the usual ordinary loci empty or not to construct geometrically the associated eigenvarieties.
\end{remark}

In the future work we will compare the eigenvariety $\mathcal{E}$ with those introduced by Emerton in \cite{E} and Chenevier in \cite{Ch1}. In particular we will study the completed cohomology of these Shimura varieties, and the possible $p$-adic Jacquet-Langlands correspondence for $\mathcal{E}$ and the corresponding eigenvariety constructed in \cite{Ch1} as in the case of curves studied in \cite{Ch0}. See \cite{N} for some results in this direction in the case of Shimura curves.


\begin{thebibliography}{99}
\bibitem{AIP}F. Andreatta, A. Iovita, V. Pilloni, \textsl{$p$-adic families of Siegel modular cuspforms}, Ann. Math. 181 (2) (2015), 623-697.
\bibitem{AIP2}F. Andreatta, A. Iovita, V. Pilloni, \textsl{$p$-adic families of Hilbert modular forms}, Preprint, available at http://perso.ens-lyon.fr/vincent.pilloni/AIP2.pdf
\bibitem{BC}J. Bella\"{\i}che, G. Chenevier, \textsl{Families of Galois representations and Selmer groups}, Ast\'erisque 324, Soc. Math. France, 2009.
\bibitem{Bi}S. Bijakowski, \textsl{Classicit\'e de formes modulaires surconvergentes}, Prenprint, arXiv:1212.2035.
\bibitem{Br}R. Brasca, \textsl{$p$-adic modular forms of non-integral weight over Shimura curves}, Compos. Math. 149 (2013), no.1, 32-62.
\bibitem{Br1}R. Brasca, \textsl{Eigenvarieties for cuspforms over PEL type Shimura varieties with dense ordinary locus}, Preprint, arXiv:1407.7973.
\bibitem{Bu}K. Buzzard, \textsl{Eigenvarieties}, Proceedings of the LMS Durham conference on $L$-functions and arithmetic geometry, London Math. Soc. Lecture Note Ser. 320 (2007), 59-120, Cambridge Univ. Press, Cambridge.
\bibitem{Ch0}G. Chenevier, \textsl{Une correspondance de Jacquet-Langlands $p$-adique}, Duke Math. J. 126 (2005), no.1, 161-194.
\bibitem{Ch1}G. Chenevier, \textsl{Une application des vari\'et\'es de Hecke des groupes unitaires}, Preprint, available at http://gaetan.chenevier.perso.math.cnrs.fr/articles/famgal.pdf
\bibitem{Ch2}G. Chenevier, M. Harris, \textsl{Constructions of Galois representations II}, Cambridge Journal of Mathematics 1 (2013), 57-73.
\bibitem{CM}R. Coleman, B. Mazur, \textsl{The eigencurve}, In Galois representations in arithmetic algebraic geometry (Durham, 1996), London Math. Soc. Lecture Note Ser. 254 (1998), 1-113, Cambridge Univ. Press, Cambridge.
\bibitem{D} Y. Ding, \textsl{Formes modulaires $p$-adiques sur les courbes de Shimura unitaires et compatibilit\'e local-global}, PhD thesis of Universit\'e Paris-Sud 11, 2015.
\bibitem{E}M. Emerton, \textsl{On the interpolation of systems of eigenvalues attached to automorphic Hecke eigenforms}, Invent. Math. 164 (2006), 1-84.
\bibitem{F}G. Faltings, \textsl{Group schemes with strict $\mathcal{O}$-action}, Mos. Math. J. (2002), Vol.2, no.2, 249-279.
\bibitem{F2} L. Fargues, \textsl{La filtration canonique des points de torsion des groupes $p$-divisibles},
Ann. Sci. \'Ecole Norm. Sup. 44 (2011), 905-961.
\bibitem{H}M. Harris, \textsl{The local Langlands correspondence: Notes of (half) a course at the IHP spring 2000}, Ast\'erisque 298 (2005), 17-145.
\bibitem{HT}M. Harris, R. Taylor, \textsl{The geometry and cohomology of some simple Shimura varieties}, Ann.
of Math. Stud. 151, Princeton Univ. Press, 2001.
\bibitem{J} C. Johansson, \textsl{A trace formula approach to control theorems for overconvergent automorphic forms}, Preprint, arXiv:1309.7239.
\bibitem{Ka}P.L. Kassaei, \textsl{$\mathcal{P}$-adic modular forms over Shimura curves over totally real fields}, Compos. Math. 140 (2004), no. 2, 359-395.
\bibitem{Ka1}P.L. Kassaei, \textsl{A gluing lemma and overconvergent modular forms}, Duke Math. J. 132 (2006), no.3, 509-529.
\bibitem{L} K. Lan, \textsl{Compactification of PEL type Shimura varieties and Kuga families}, Preprint, available at http://www.math.umn.edu/$\sim$kwlan/articles/cpt-ram-ord.pdf
\bibitem{N}J. Newton, \textsl{Completed cohomology of Shimura curves and $p$-adic Jacquet-Langlands correspondence}, Math. Annalen 355 (2013), no. 2, 729-763.
\bibitem{PS}V. Pilloni, B. Stroh, \textsl{Surconvergence et classicit\'e: le cas d\'eploy\'e}, Preprint, available at http://perso.ens-lyon.fr/vincent.pilloni/surconv-deploye.pdf
\bibitem{ST}X. Shen, Y. Tian, \textsl{Canonical subgroups of $\pi$-divisible $\mathcal{O}$-modules}, in preparation.
\bibitem{Shin}S. W. Shin, \textsl{Galois representations arising from some compact Shimura varieties}, Ann. Math. (2), 173 (3) (2011), 1645-1741.
\bibitem{TY}R. Taylor, T. Yoshida, \textsl{Compatibility of local and global Langlands correspondences}, J. Amer. Math. Soc. 20 (2) (2007), 467-493.
\bibitem{U}E. Urban, \textsl{Eigenvarieties for reductive groups}, Ann. Math. 174 (2011), 1685-1784.
\bibitem{W} T. Wedhorn, \textsl{Ordinariness in good reductions of Shimura varieties of PEL-type}, Ann. Sci. de l'ENS 32, (1999), 575 - 618.

\end{thebibliography}
\end{document}